\newtheorem{theorem}{Theorem}[section]
\newtheorem{corollary}[theorem]{Corollary}
\newtheorem{proposition}[theorem]{Proposition}
\newtheorem{definition}[theorem]{Definition}
\newtheorem{remark}[theorem]{Remark}
\newtheorem{lemma}[theorem]{Lemma}
\newcommand{\Gm}{{\mathfrak m}}
\newcommand{\Gin}{{\rm Gin}}
\newcommand{\Hilb}{{\rm H}}
\newcommand{\Gp}{{\mathfrak p}}
\newcommand{\Gq}{{\mathfrak q}}
\newcommand{\Ass}{{\rm Ass}}
\newcommand{\GB}{{\mathfrak B}}
\begin{document}

\title[Completely $\Gm$-full ideals and componentwise linear ideals]
{Completely $\Gm$-full ideals \\ and componentwise linear ideals}


\author{Tadahito Harima}

\address{
Department of Mathematics Education \\
Niigata University \\
Niigata 950-2181 \\
JAPAN
}

\email{harima@ed.niigata-u.ac.jp}

\author{Junzo Watanabe}
\address{Department of Mathematics, Tokai University, 
    Hiratsuka 259-1292, JAPAN
}

\email{watanabe.junzo@tokai-u.jp}

\thanks{
The first author was  supported by Grant-in-Aid for Scientific Research (C) (23540052). 
The second author was  supported by Grant-in-Aid for Scientific Research (C) (24540050). 
}

\subjclass{Primary 13F20; Secondary 13A02, 13D02}

\keywords{$\Gm$-full ideal, completely $\Gm$-full ideal, graded Betti numbers, generic initial ideal}


\begin{abstract}
We show that the class of completely $\Gm$-full ideals
coincides with the class of componentwise linear ideals 
in a polynomial ring over an infinite field.
\end{abstract}

\maketitle


\section{Introduction}

The notion of completely $\Gm$-full ideals in a local ring 
was introduced by the second author \cite{Watanabe2}, 
and the notion of componentwise linear ideals in a polynomial ring 
was introduced by Herzog and Hibi  \cite{HH}. 
These ideals are two important classes of ideals having various interesting properties. 
In \cite{HW} the authors proved that 
these notion are equivalent in the class of graded ideals 
provided that their generic initial ideals with respect to 
the graded reverse lexicographic order are stable, 
and further conjectured that 
these notions are equivalent without adding the assumption on generic initial ideals. 
The purpose of this paper is to prove that the conjecture is true. 
The following is the main theorem.

\begin{theorem}\label{main theorem} 
Let $K$ be an infinite field of any characteristic 
and $I$ a graded ideal of the polynomial ring $R=K[x_1,\ldots,x_n]$. 
Then 
$I$ is a completely $\Gm$-full ideal if and only if it is a componentwise linear ideal. 
\end{theorem}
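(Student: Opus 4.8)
The plan is to argue by induction on the number $n$ of variables, exploiting that both properties are detected by a general hyperplane section. Fix a general linear form $z\in R_1$, put $R'=R/(z)\cong K[x_1,\dots,x_{n-1}]$ and $\bar I=(I+(z))/(z)\subseteq R'$, and write $\Gin(I)$ for the generic initial ideal with respect to the graded reverse lexicographic order. I would use four ingredients: (a) $\Gin$ commutes with general hyperplane sections, i.e.\ $\Gin(\bar I)=\overline{\Gin(I)}$; (b) it is known that $I$ is componentwise linear if and only if $\beta_{ij}^{R}(I)=\beta_{ij}^{R}(\Gin(I))$ for all $i,j$, and one always has $\beta_{ij}^{R}(I)\le\beta_{ij}^{R}(\Gin(I))$; (c) the known characterisation that $I$ is $\Gm$-full if and only if $\mu(I)=\mu(\Gin(I))$, equivalently $\beta_{0j}^{R}(I)=\beta_{0j}^{R}(\Gin(I))$ for all $j$ --- the ``$i=0$'' shadow of (b); (d) if an ideal generated in a single degree has a linear resolution over $R$, then its image modulo $z$ has a linear resolution over $R'$, and since $(\bar I)_{\langle d\rangle}=\overline{I_{\langle d\rangle}}$ this shows componentwise linearity passes to $\bar I$. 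The base case $n=1$ is immediate, as every ideal of $K[x_1]$ is principal and hence both componentwise linear and completely $\Gm$-full.

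For the implication ``componentwise linear $\Rightarrow$ completely $\Gm$-full'', suppose $I$ is componentwise linear. By (b) and (c) it is $\Gm$-full, and by (d) $\bar I$ is again componentwise linear; running down a general flag of linear forms, every successive hyperplane-section quotient of $I$ is $\Gm$-full, so $I$ is completely $\Gm$-full. This direction is routine, and the substance lies in the converse.

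Assume then that $I$ is completely $\Gm$-full. Then $\bar I$ is completely $\Gm$-full in $R'$, so by the inductive hypothesis $\bar I$ is componentwise linear, and by (a) and (b) we get $\beta_{ij}^{R'}(\bar I)=\beta_{ij}^{R'}(\overline{\Gin(I)})$ for all $i,j$; since $I$ is $\Gm$-full, (c) gives $\beta_{0j}^{R}(I)=\beta_{0j}^{R}(\Gin(I))$ for all $j$. It remains to promote these equalities to $\beta_{ij}^{R}(I)=\beta_{ij}^{R}(\Gin(I))$ for all $i,j$, which by (b) proves $I$ componentwise linear and closes the induction. To do so I would compare the minimal graded free resolutions of $R/I$ and of $R/\Gin(I)$ through their hyperplane sections, using the four-term exact sequence
\[
0\longrightarrow\bigl((I:z)/I\bigr)(-1)\longrightarrow (R/I)(-1)\xrightarrow{\ \cdot z\ }R/I\longrightarrow R'/\bar I\longrightarrow 0
\]
and its monomial counterpart for $\Gin(I)$ with $z$ replaced by $x_n$. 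Splitting off the finite-length module $(I:z)/I$ yields short exact sequences and a mapping cone that assembles a resolution of $R/I$ from one of $R'/\bar I$ together with the homology of $(I:z)/I$; $\Gm$-fullness of $I$ is exactly what forces this mapping cone to be minimal, so that no Betti numbers beyond those of $\Gin(I)$ appear, and with $\beta_{ij}^{R}(I)\le\beta_{ij}^{R}(\Gin(I))$ this gives equality. So the key lemma to prove is: \emph{if $\bar I$ and $\overline{\Gin(I)}$ have equal graded Betti numbers over $R'$, and $I$ and $\Gin(I)$ have equal numbers of generators in each degree, then $I$ and $\Gin(I)$ have equal graded Betti numbers over $R$.}

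The main obstacle is precisely this lemma in positive characteristic. In \cite{HW} the analogous step was carried out under the hypothesis that $\Gin(I)$ is a stable monomial ideal: then the Eliahou--Kervaire resolution makes every Betti number of $\Gin(I)$, and its behaviour under deletion of the last variable, completely explicit, and the comparison above reduces to bookkeeping. When $\operatorname{char} K=p>0$ the ideal $\Gin(I)$ is only Borel-fixed, and when it is not stable no such explicit description is at hand. To dispense with the hypothesis I would replace the Eliahou--Kervaire input by a structural short exact sequence --- valid for \emph{every} Borel-fixed monomial ideal $M$, relating $M$, its image $\overline M$ modulo $x_n$ and the colon module $(M:x_n)/M$, and compatible with minimal free resolutions up to a mapping cone --- together with the parallel statement for an arbitrary $\Gm$-full ideal; the delicate point is then to show that on the two sides the mapping cones are simultaneously minimal, which rests on $\Gm$-fullness alone and not on stability. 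That characteristic-free comparison is the technical core of the proof, after which the induction closes in every characteristic.
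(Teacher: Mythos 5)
Your plan hinges on two ``known'' ingredients that are in fact false, and the false ones are exactly where the substance of the theorem lies. Ingredient (c), ``$I$ is $\Gm$-full if and only if $\mu(I)=\mu(\Gin(I))$,'' fails in both directions: the ideal $I=(x_1^2,x_2^2)\subset K[x_1,x_2,x_3]$ satisfies $\Gm I:x_3=I$, hence is $\Gm$-full, yet (in characteristic $0$, say) $\Gin(I)=(x_1^2,x_1x_2,x_2^3)$ has $\mu(\Gin(I))=3>2=\mu(I)$; conversely, in characteristic $p$ the ideal $I=(x_1^p,x_2^p)\subset K[x_1,x_2]$ is Borel-fixed, so $\Gin(I)=I$ and $\mu(I)=\mu(\Gin(I))$, but $I$ is not $\Gm$-full (the criterion $\mu(I)=\mu(\overline{I})+l(R/(I+zR))$ reads $2\neq 1+p$). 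The same example $(x_1^p,x_2^p)$ kills ingredient (b) in the direction you need for the hard implication: all graded Betti numbers of $I$ and $\Gin(I)$ coincide trivially, yet $I$ is not componentwise linear. In positive characteristic the equivalence ``equal Betti tables $\Leftrightarrow$ componentwise linear'' requires the additional hypothesis that $\Gin(I)$ be \emph{stable} (this is precisely how Nagel--R\"omer state their criteria), so your concluding step ``equal Betti numbers $\Rightarrow$ componentwise linear'' cannot be made to work by refining the mapping-cone lemma alone: the statement it feeds into is false for Borel-fixed, non-stable $\Gin(I)$, and your sketch never proves that $\Gin(I)$ is stable.

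In effect you have assumed the two facts that constitute the paper's proof: that for a \emph{completely} $\Gm$-full ideal one has $\mu(I)=\mu(\Gin(I))$, and that its $\Gin$ is stable (not merely Borel-fixed). Neither follows from $\Gm$-fullness alone, as the examples above show, and the paper obtains them by a different mechanism that is absent from your proposal: the $t$-sequence of $I$, its invariance under passage to $\Gin(I)$ (a Hilbert-function argument via Conca's lemma), and the numerical characterization ``$I$ is completely $\Gm$-full iff $\mu(I)=B(I)$,'' where $B(I)$ is the sum of the $t$-sequence. From this one gets that $\Gin(I)$ is again completely $\Gm$-full, hence stable (a completely $\Gm$-full monomial ideal with respect to $x_n,\dots,x_1$ is stable), and then $\mu(I)=\mu(\Gin(I))$ follows by induction using $\mu(I)=\mu(\overline{I})+l((I:\Gm)/I)$ for $\Gm$-full ideals, Green's theorem $\overline{\Gin(I)}=\Gin(\overline{I})$, and the equality $l((I:\Gm)/I)=l((\Gin(I):\Gm)/\Gin(I))$; the conclusion is then the Nagel--R\"omer criterion, with no comparison of higher Betti numbers needed. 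To repair your argument you would have to supply a characteristic-free proof that $\Gin$ of a completely $\Gm$-full ideal is stable and minimally generated in the same degrees as $I$ -- which is the theorem's real content, not a quotable input.
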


The ``if'' part was proved in Proposition 18 of \cite{HW}. 
So we will show the ``only if'' part. 
For the proof of the ``only if'' part, 
we use the characterization theorem for componentwise linear ideals 
by Nagel and R\"{o}mer \cite{NR}. 
Their result says that 
the following conditions are equivalent for a graded ideal $I$ of $R=K[x_1,\ldots,x_n]$.  
\begin{enumerate}
\item[(i)] 
$I$ is a componentwise linear ideal. 
\item[(ii)] 
The generic initial ideal $\Gin (I)$ of $I$ is stable and $\mu(I)=\mu(\Gin(I))$, 
where $\mu$ denotes the minimal number of generators of an ideal. 
\end{enumerate}

In Section 5 
we prove that if $I$ is a completely $\Gm$-full ideal 
then $\Gin(I)$ is stable and $\mu(I)=\mu(\Gin(I))$. 
In Section 2 
we summarize basic notation and definitions. 
In Section 3 
we introduce the notion of $t$-sequences for a graded ideal, 
and in Section 4 we give a characterization of completely $\Gm$-full ideals 
in terms of the $t$-sequences. 
It plays a key role in the proof of Theorem~\ref{main theorem}. 
In section 6 we show that 
a theorem of Nagel-R\"{o}mer  (\cite{NR}) is 
an immediate consequence of Thoerem~\ref{main theorem}.


\section{Notation and definitions}

Throughout this paper, 
we let $K$ be an infinite field of any characteristic, 
$R=K[x_1,\ldots,x_n]$ the polynomial ring in $n$ variables over $K$ with the standard grading, 
and $\Gm=(x_1,\ldots,x_n)$ the graded maximal ideal.  
Let $\Gin(I)$ denote the generic initial ideal of an ideal $I$ of $R$ 
with respect to  the graded reverse lexicographic order 
induced by $x_1>\cdots>x_n$, 
and $\Hilb(M,j)=\dim_KM_j$ 
the Hilbert function of a graded module $M=\oplus_{j\geq 0}M_j$ over $R$. 
Let $l$ and $\mu$ be the length and the minimal number of generators 
of a graded ideal $I$ in $R$, respectively, 
hence $\mu (I) = l (I/\Gm I)$. 
The type of a graded ideal $I$ is the length of $(I:\Gm)/I$ as an ideal of $R/I$. 
It is equal to the last free rank in the minimal free resolution of $R/I$. 
\medskip

The definition  of {\em $\Gm$-full ideal} is due to Rees. 
We adapt the definition to  graded ideals as follows.  

\begin{definition}[\cite{Watanabe}]
{\rm 
A graded ideal $I$ of $R=K[x_1,\ldots,x_n]$ is said to be {\em $\Gm$-full} 
if there exists an element $z$ of $R$ such that $\Gm I:z = I$. 
}
\end{definition}

\begin{remark}
{\rm 
Suppose that $I$ is  an $\Gm$-full ideal of $R$.  
Then the equality $\Gm I:z = I$ holds 
for a general linear form $z$ of $R$ (\cite[Remark 2 (i)]{Watanabe}). 
}
\end{remark}

We adapt the original definition of completely $\Gm$-full  ideals 
(defined in \cite{Watanabe2}) to the graded ideals as follows.

\begin{definition}[\cite{Watanabe2}] \label{completely m-full}
{\rm 
Let $I$ be a graded ideal of $R=K[x_1,\ldots,x_n]$. 
We define the {\em completely $\Gm$-full ideals} recursively as follows. 
\begin{enumerate}
\item[(1)] 
If $n=0$ (i.e., if $R$ is a field), then the zero ideal is completely $\Gm$-full.  
\item[(2)] 
If $n>0$, 
then $I$ is completely $\Gm$-full 
if $\Gm I:z=I$ and $(I+zR)/zR$ is completely $\Gm$-full as an ideal of $R/zR$, 
where $z$ is a general linear form in $R$. 
(The definition makes sense by induction on $n$.) 
\end{enumerate}
}
\end{definition}

\begin{remark}
{\rm 
For a completely $\Gm$-full ideal $I$ of $R=K[x_1,\ldots,x_n]$, 
there exist $n$ general linear forms $z_n,z_{n-1},\ldots,z_1$ in $R$ 
satisfying the following conditions: 
\begin{enumerate}
\item[(i)] 
$\Gm I:z_n = I$, i.e., $I$ is $\Gm$-full. 
\item[(ii)] 
$\overline{\Gm}\overline{I}:\overline{z_{n-i+1}} = \overline{I}$ 
in $\overline{R}=R/(I,z_n,\ldots,z_{n-i+2})$ for all $i=2,3,\ldots,n$, 
where $\overline{\ast}$ denotes the reduction mod $(I,z_n,\ldots,z_{n-i+2})$. 
\end{enumerate} 
In this case we say that 
$(I;z_n,z_{n-1},\ldots,z_1)$ has the complete $\Gm$-full property. 
}
\end{remark}

\begin{definition}[\cite{HH}] \label{componentwise linear}
{\rm 
If $I$ is a graded ideal of $R=K[x_1,\ldots,x_n]$, 
then we write $I_{<j>}$ for the ideal generated by 
all homogeneous polynomials of degree $j$ belong to $I$. 
We say that a graded ideal $I$ of $R$ is {\em componentwise linear} 
if $I_{<j>}$ has a linear resolution for all $j$. 
}
\end{definition}

\begin{definition}
{\rm 
A monomial ideal $I$ of $R=K[x_1,\ldots,x_n]$ is said to be {\em stable} 
if $I$ satisfies the following condition: 
for each monomial $u\in I$, 
the monomial $x_iu/x_{m(u)}$ belongs to $I$ for every $i<m(u)$, 
where $m(u)$ is the largest index $j$ such that $x_j$ divides $u$. 
}
\end{definition}

It is known that 
stable ideals are completely $\Gm$-full 
(\cite[Section 4]{Watanabe3} and \cite[Example 17]{HW}), 
and also componentwise linear (\cite[Example 1.1]{HH}).


\section{The $t$-sequence of a graded ideal }

The second author \cite{Watanabe3} defined 
the $t$-sequence for a completely $\Gm$-full ideal. 
In this section we extend the notion of $t$-sequences to graded ideals in general. 
The following is a revised version of Theorem C in \cite{Watanabe}.

\begin{theorem}\label{thm: t-sequence}
Let $X_1,\ldots,X_n$ be indeterminates over $R=K[x_1,\ldots,x_n]$, 
let $R^\prime$ denote $S=R[X_1,\ldots,X_n]$ localized at $\Gm R[X_1,\ldots,X_n]$ 
and set $Y=X_1x_1+\cdots+X_nx_n$ in $R^\prime$. 
Let $I$ be a graded ideal of $R$. 
Then we have the following. 
\begin{enumerate}
\item[(1)] 
$l((IR^\prime:_{R^\prime}Y)/IR^\prime)$ is finite. 
\item[(2)] 
$l((IR^\prime:_{R^\prime}Y)/IR^\prime) \leq l((I:_{R}y)/I)$ for all linear forms $y$ in $R$. 
\item[(3)] 
$l((IR^\prime:_{R^\prime}Y)/IR^\prime) = l((I:_{R}y)/I)$ for a general linear form $y$ in $R$. 
\end{enumerate}
\end{theorem}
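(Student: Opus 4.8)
\medskip
\noindent\emph{Plan of proof.}
The plan is to reduce the whole statement, degree by degree, to elementary linear algebra over $K$ and over $L:=K(X_1,\dots,X_n)$. Set $\tilde R:=R\otimes_K L=L[x_1,\dots,x_n]$, a polynomial ring over $L$ with graded maximal ideal $\tilde\Gm=\Gm\tilde R$, and observe that $Y=X_1x_1+\dots+X_nx_n$ is a homogeneous linear form of $\tilde R$. A routine check of which multiplicative sets are being inverted identifies $R'$ with $\tilde R_{\tilde\Gm}$, and under this identification $IR'=(I\tilde R)_{\tilde\Gm}$. I would first record the following reduction: any graded $\tilde R$-module of finite length is supported only at $\tilde\Gm$, so if $(I\tilde R:_{\tilde R}Y)/I\tilde R$ has finite length then localizing it at $\tilde\Gm$ changes nothing, it equals $(IR':_{R'}Y)/IR'$, and
\[
l\bigl((IR':_{R'}Y)/IR'\bigr)=l_{\tilde R}\bigl((I\tilde R:_{\tilde R}Y)/I\tilde R\bigr)=\sum_d\dim_L[(I\tilde R:Y)/I\tilde R]_d .
\]
Thus part~(1) will follow once finiteness over $\tilde R$ is proved, and parts~(2),~(3) reduce to comparing, degree by degree, $\dim_L[(I\tilde R:Y)/I\tilde R]_d$ with $\dim_K[(I:y)/I]_d$.

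To make this comparison uniform in $y$, write $M=R/I$ and, for each $d$, let $\phi_d$ be the matrix over $K[X_1,\dots,X_n]$, with entries $K$-linear forms in the $X_i$, representing multiplication by $Y$ from $M_d\otimes_K K[X_1,\dots,X_n]$ to $M_{d+1}\otimes_K K[X_1,\dots,X_n]$ (concretely $\phi_d=\sum_i X_iA_i$, with $A_i$ the matrix of multiplication by $x_i$ from $M_d$ to $M_{d+1}$). Specializing $X_i\mapsto a_i$ turns $\phi_d$ into the matrix of multiplication by $y=\sum a_ix_i$ on $M_d\to M_{d+1}$, whose kernel is $[(I:y)/I]_d$; extending scalars from $K[X]$ to $L$ turns $\phi_d$ into multiplication by $Y$ on $(\tilde R/I\tilde R)_d$, whose kernel is $[(I\tilde R:Y)/I\tilde R]_d$. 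Since the generic rank $\rho_d$ of $\phi_d$ is the maximum of the ranks of its specializations, attained off the proper Zariski-closed locus $Z_d\subsetneq\mathbb A^n$ where all $\rho_d\times\rho_d$ minors vanish (and $\mathbb A^n\setminus Z_d$ has $K$-points since $K$ is infinite), we obtain for every $d$ and every linear form $y=\sum a_ix_i$
\[
\dim_K[(I:y)/I]_d\ \ge\ \dim_L[(I\tilde R:Y)/I\tilde R]_d ,
\]
with equality whenever $(a_i)\notin Z_d$.

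For the finiteness in part~(1) I would show $0:_{\tilde R/I\tilde R}Y$ is supported only at $\tilde\Gm$. Its associated primes lie among those associated primes $Q$ of $\tilde R/I\tilde R$ with $Y\in Q$. Since $\tilde R$ is faithfully flat over $R$ and $(R/P)\otimes_K L$ is a domain for every prime $P$ of $R$, one has $\Ass_{\tilde R}(\tilde R/I\tilde R)=\{P\tilde R:P\in\Ass_R(R/I)\}$, and each $Q=P\tilde R$ is a homogeneous prime with $(P\tilde R)_1=P_1\otimes_K L$ inside $R_1\otimes_K L$, where $P_1=P\cap R_1$. If $P\ne\Gm$ then $P_1\subsetneq R_1$; choosing a $K$-linear complement of $P_1$ in $R_1$ and letting $q_i$ be the image of $x_i$ in it, the relation $Y\in(P\tilde R)_1$ would force $\sum_i X_iq_i=0$ in that complement tensored with $L$, whence $q_1=\dots=q_n=0$ by the $K$-linear independence of $X_1,\dots,X_n$, contradicting $P_1\ne R_1$. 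Hence no non-maximal associated prime of $\tilde R/I\tilde R$ contains $Y$, so $0:_{\tilde R/I\tilde R}Y$ is $\tilde\Gm$-coprimary, hence of finite length. This linear-independence argument, together with the base-change description of the associated primes, is the technical core of the whole theorem. Part~(2) is then immediate: if $y=0$ or $y$ lies in an associated prime $\ne\Gm$ of $R/I$, then $(I:y)/I$ has infinite length and there is nothing to prove; otherwise $(I:y)/I$ has finite length, $l((I:y)/I)=\sum_d\dim_K[(I:y)/I]_d$, and summing the degreewise inequalities (the right-hand side being $0$ for $d\gg0$ by part~(1)) gives $l((I:y)/I)\ge l((IR':Y)/IR')$.

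For part~(3) the obstacle is that the summed inequality becomes an equality only if $(a_i)\notin Z_d$ in \emph{every} degree $d$, i.e.\ infinitely many conditions. I would remove this with a uniform degree bound. If $(a_i)$ avoids the proper closed set $Z_0$ of linear forms lying in some associated prime $\ne\Gm$ of $R/I$, then $0:_{R/I}y$ is $\Gm$-torsion, hence contained in the finite-length graded module $\mathrm{H}^0_{\Gm}(R/I)=(I:\Gm^{\infty})/I$; base-changed to $\tilde R$, the same module contains $0:_{\tilde R/I\tilde R}Y$. Let $D$ be a degree above which $\mathrm{H}^0_{\Gm}(R/I)$ vanishes, and take $y=\sum a_ix_i$ a general linear form with $(a_i)\notin Z_0\cup Z_1\cup\dots\cup Z_D$ (a finite union of proper closed sets). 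Then in degrees $d\le D$ the degreewise inequality is an equality by the choice of $Z_d$, while in degrees $d>D$ both $[(I:y)/I]_d$ and $[(I\tilde R:Y)/I\tilde R]_d$ vanish; summing and invoking the reduction in the first paragraph yields $l((I:y)/I)=l((IR':Y)/IR')$, which together with part~(2) completes the proof.
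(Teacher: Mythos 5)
Your proof is correct, but it follows a genuinely different route from the paper's. The paper does not argue degreewise from scratch: it quotes Theorem A of Watanabe's paper ``$\mathfrak m$-full ideals'' (Nagoya Math.\ J.\ 106, 1987), which gives $l\bigl(R'/((I+\Gm^{s+1})R'+YR')\bigr)\le l\bigl(R/((I+\Gm^{s+1})+yR)\bigr)$ for all $s$ and all linear $y$, with equality for general $y$; it then converts these colength inequalities into the inequalities $\sum_{j\le s}\Hilb((IR':Y)/IR',j)\le\sum_{j\le s}\Hilb((I:y)/I,j)$ via the four-term exact sequences $0\to(I:y)/I\to R/I\to R/I\to R/(I+yR)\to 0$ (and its $R'$-analogue), and settles finiteness through a separate lemma proved by primary decomposition ($l((I:y)/I)<\infty$ for general $y$). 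You instead make the whole statement self-contained: you identify $R'$ with the localization of $K(X_1,\dots,X_n)[x_1,\dots,x_n]$ at its graded maximal ideal, compare kernels of multiplication by $Y$ and by $y=\sum a_ix_i$ degree by degree via semicontinuity of rank of $\sum X_iA_i$ under specialization, prove finiteness by flat base change of associated primes together with the observation that $Y$ lies in no extended prime except $\Gm\tilde R$, and, crucially, supply the uniform degree bound via $\mathrm H^0_{\Gm}(R/I)$ to collapse the infinitely many open conditions in part (3) to finitely many --- a point the paper sidesteps entirely by citing the ``equality for general $y$'' half of Theorem A. What each approach buys: yours is elementary and makes transparent that the $R'$-construction is just ``working over the generic linear form,'' at the cost of redoing (in the graded setting) what Theorem A already encodes; the paper's is much shorter given the citation and stays within the length/Hilbert-function framework of the $\mathfrak m$-full literature it builds on. Two cosmetic points: you reuse the symbol $Z_0$ both for the degree-$0$ rank locus and for the locus of linear forms in an associated prime other than $\Gm$ (rename one of them), and you should state explicitly that the degreewise inequality also covers the trivial cases $y=0$ or $\dim M_d$ varying, though as written nothing breaks.
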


To prove this theorem we prepare a lemma. 

\begin{lemma}\label{finiteness of colon}
Let $I$ be a graded ideal of $R=K[x_1,\ldots,x_n]$. 
Then $l((I:y)/I)$ is finite 
for general linear forms $y$ of $R$. 
\end{lemma}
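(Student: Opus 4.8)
The plan is to realise $(I:y)/I$ as the kernel of multiplication by $y$ on $R/I$ and to show that, for general $y$, this kernel is supported only at $\Gm$; being a finitely generated module over the Noetherian ring $R$, it then has finite length.

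Put $M=R/I$, a finitely generated graded $R$-module, and note that
\[
(I:y)/I = (0:_M y) = \ker\bigl(M \xrightarrow{\ \cdot y\ } M\bigr).
\]
Since $M$ is graded and finitely generated, $\Ass(M)$ is a finite set of homogeneous primes, say $\Ass(M)=\{\Gp_1,\dots,\Gp_s\}$. For each $\Gp_i\ne \Gm$ the intersection $\Gp_i\cap R_1$ is a \emph{proper} $K$-subspace of $R_1$, because $R_1\subseteq\Gp_i$ would give $\Gm=(R_1)\subseteq\Gp_i$ and hence $\Gp_i=\Gm$ by maximality of $\Gm$. As $K$ is infinite, finitely many proper subspaces cannot cover $R_1$, so
\[
U \;=\; R_1\setminus\bigcup_{\Gp_i\ne\Gm}\bigl(\Gp_i\cap R_1\bigr)
\]
is a nonempty Zariski-open subset of $R_1$; I take this $U$ as the locus of ``general linear forms''.

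Next, fix $y\in U$ and set $N=(0:_M y)$. As $N$ is a submodule of $M$, $\Ass(N)\subseteq\Ass(M)$. If $\Gq\in\Ass(N)$ then $\Gq=\mathrm{Ann}_R(m)$ for some nonzero $m\in N$, and $ym=0$ forces $y\in\Gq$; by the choice of $y$ this rules out $\Gq=\Gp_i\ne\Gm$, so $\Gq=\Gm$. Hence $\Ass(N)\subseteq\{\Gm\}$, and therefore $\mathrm{Supp}(N)\subseteq\{\Gm\}$ (for a prime $\Gp\ne\Gm$ the localisation $N_\Gp$ has empty set of associated primes, hence $N_\Gp=0$). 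A finitely generated $R$-module supported only at $\Gm$ is annihilated by some power $\Gm^k$, hence is a finitely generated module over the Artinian ring $R/\Gm^k$; thus $l\bigl((I:y)/I\bigr)=l(N)<\infty$, as required.

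Everything here is a routine prime-avoidance argument, so I do not expect a genuine obstacle. The one point worth flagging is the reading of ``general'': it must mean ``lying in a fixed nonempty Zariski-open subset of $R_1$'', and the nonemptiness of that subset — the fact that finitely many proper linear subspaces do not exhaust $R_1$ — is exactly where the standing hypothesis that $K$ is infinite enters (over a finite field a single linear form can lie in all of the $\Gp_i$, and then the conclusion can fail, e.g. $I=(x_1x_2)$ with $y=x_1$).
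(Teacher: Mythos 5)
Your proof is correct and takes essentially the same route as the paper: choose $y$ outside every associated prime of $R/I$ other than $\Gm$ (possible by prime avoidance, using that $K$ is infinite), and conclude that $(I:y)/I$ is annihilated by a power of $\Gm$, hence of finite length. The only cosmetic difference is that the paper verifies the $\Gm$-torsion claim via a minimal primary decomposition together with a case split on whether $\Gm\in\Ass(I)$, whereas you argue uniformly through $\Ass\bigl((0:_{R/I}y)\bigr)\subseteq\Ass(R/I)$, which is a slightly cleaner bookkeeping of the same idea.
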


\begin{proof} 
Let $\Ass (I)$ be the set of associated prime ideals of $I$.
If $\Ass (I)=\{\Gm\}$, 
then it is obvious that 
$l((I:y)/I)$ is finite for all linear forms $y$ in $R$ 
since $I$ is $\Gm$-primary. 
If $\Gm \not \in \Ass (I)$ 
then $I:y=I$ for 
a general linear form $y$ in $R$, 
because $y$  is a non-zero divisor for $R/I$ if y is general enough. 
Hence $l((I:y)/I)=0$ in this case. 
So we assume that $I$ is not $\Gm$-primary and $\Gm \in \Ass(I)$. 
Let $I=\cap_{i=1}^{u} \Gq_i$ be a minimal primary decomposition of $I$, 
where $\sqrt{\Gq_1} = \Gm$. 
Let  $y$ be a linear form of $R$ 
such that $y \not \in \Gp$ for all  $\Gp \in \Ass(I) \backslash \{\Gm\}$.  
It suffices to show that $l((I:y)/I)$ is finite. 
We have  $I:y=\cap_{i=1}^{u} (\Gq_i:y)$. 
Since  $\sqrt{\Gq_1:y} = \Gm$ and $\Gq_i:y = \Gq _i$ for $ i > 1$,  
one sees that 
 $(I:y)/I$ is annihilated by a power of $\Gm$. 
This implies that $l((I:y)/I)$ is finite. 
\end{proof}

\begin{proof}[Proof of Theorem~\ref{thm: t-sequence}.] 
By the first part of Theorem A in \cite{Watanabe}, 
we have the inequalities 
\begin{equation}\label{EQ:3.1}
l(R^\prime / (I+\Gm^{s+1})R^\prime + YR^\prime)  
\leq l(R / (I+\Gm^{s+1}) + yR)  
\end{equation}
for all $s\geq 0$. 
That is, the inequalities
\[ 
\sum_{j=0}^s \Hilb(R^\prime / (IR^\prime + YR^\prime) ,j) 
\leq \sum_{j=0}^s \Hilb(R / (I + yR),j)   
\] 
hold for all $s\geq 0$. 
From the exact sequence 
\[
0 \rightarrow (I:y)/I \rightarrow R/I \stackrel{\times y}{\rightarrow} 
R/I \rightarrow R/(I+yR) \rightarrow 0, 
\] 
it follows that 
\[
\Hilb((I:y)/I,j)=\Hilb(R/I,j)-\Hilb(R/I,j+1)+\Hilb(R/I+yR,j+1)
\] 
for all $j\geq 0$. 
Similarly it follows that 
\[
\Hilb((IR^\prime:Y)/IR^\prime,j)
=\Hilb(R^\prime/IR^\prime,j)-\Hilb(R^\prime/IR^\prime,j+1)
+\Hilb(R^\prime/(IR^\prime+YR^\prime),j+1)
\] 
for all $j\geq 0$. 
Hence, 
since $R/I$ and $R^\prime/IR^\prime$ have the same Hilbert function, 
we obtain the inequalities 
\[
\sum_{j=0}^s \Hilb((IR^\prime:Y) / IR^\prime , j) 
\leq \sum_{j=0}^s \Hilb((I:y) / I, j)   
\] 
for all $s\geq 0$. 
Furthermore 
it follows by Lemma~\ref{finiteness of colon} that, 
for a general linear form $y$ of $R$, 
the equalities  
\[
l((I:y)/I) = \sum_{j=0}^s \Hilb((I:y) / I, j)   
\] 
hold for all $s >> 0$. 
Therefore we have 
\[
\sum_{j=0}^s \Hilb((IR^\prime:Y) / IR^\prime , j) 
\leq l((I:y)/I) 
\] 
for all $s >> 0$. 
Thus the assertions (1) and (2) are easily verified. 
The assertion (3) is also easy, 
since the equality in (\ref{EQ:3.1}) holds for a general linear form $y$ of $R$ 
by the second part of Theorem A in \cite{Watanabe}. 
\end{proof}

\begin{definition}
{\rm 
With the same notation as Theorem~\ref{thm: t-sequence}, 
we define $t(I)$ for a graded ideal $I$ by 
\[
t(I)=l((IR^\prime:_{R^\prime}Y)/IR^\prime). 
\]
We call $t(I)$ the {\em $t$-value} of $I$. 
Note that the equality 
\[
t(I)={\rm Min} \{ l((I:y)/I) \mid \mbox{$y$ is a linear form of $R$} \}
\]
holds by Theorem~\ref{thm: t-sequence}. 
}
\end{definition}

\begin{definition}\label{def: t-sequence}
{\rm 
Let $\{z_1,\ldots,z_n\}$ be a set of generators of $\Gm$ consisting of general linear forms. 
Set 
\[ 
R^{(i)}=R/(z_{i+1},z_{i+2},\ldots,z_{n})R 
\]
for all $i=0,1,\ldots,n-1$, and $R^{(n)}=R$. 
Let $t_i=t_i(I)$ denote the $t$-value of $IR^{(i+1)}$ for all $i=0,1,\ldots,n-1$. 
Note that $t_{0}=1$. 
We call the sequence $t_0,t_1,\ldots,t_{n-1}$ the {\em $t$-sequence} of $I$. 
This is a generalization of the notion of $t$-sequences 
introduced by the second author in \cite{Watanabe3}. 
We will discuss it in Remark~\ref{extension of t-sequences} bellow. 
}
\end{definition}

\begin{remark}\label{t-seq of I and GinI} 
{\rm 
We show that the $t$-sequence of $I$ is independent of 
a choice of general generators of $\Gm$.
We use the same notation as Definition~\ref{def: t-sequence}. 
Let $\overline{z_i}$ be the image of $z_i$ in $R^{(i)}$. 
From the exact sequence 
\[
\begin{array}{rl}
0 \rightarrow (IR^{(i)}:\overline{z_{i}})/IR^{(i)}  & \rightarrow R^{(i)}/IR^{(i)} \\
 & \stackrel{\times \overline{z_{i}}}{\rightarrow} R^{(i)}/IR^{(i)} 
\rightarrow R^{(i)}/(IR^{(i)}+\overline{z_{i}}R^{(i)}) \rightarrow 0, 
\end{array}
\]
it follows that 
\[ 
\begin{array}{rcl}
\Hilb((IR^{(i)}:\overline{z_{i}})/IR^{(i)},j)  & = & \Hilb(R^{(i)}/(IR^{(i)}+\overline{z_{i}}R^{(i)}),j+1) \\ 
& & \hspace{0.5cm} -\Hilb(R^{(i)}/IR^{(i)},j+1)+\Hilb(R^{(i)}/IR^{(i)},j) 
\end{array}
\] 
for all $j$. 
Set $R_{(i)}=R/(x_{i+1},x_{i+2},\ldots,x_n)R$ for all $i=0,1,\ldots,n-1$, 
$R_{(n)}=R$ and $J=\Gin(I)$. 
Similarly we get 
\[ 
\begin{array}{rcl}
\Hilb((JR_{(i)}:\overline{x_{i}})/JR_{(i)},j)  & = & \Hilb(R_{(i)}/(JR_{(i)}+\overline{x_{i}}R_{(i)}),j+1) \\ 
& & \hspace{0.5cm} -\Hilb(R_{(i)}/JR_{(i)},j+1)+\Hilb(R_{(i)}/JR_{(i)},j) 
\end{array}
\] 
for all $j$, 
where $\overline{x_i}$ is the image of $x_i$ in $R_{(i)}$. 
Hence, since 
\[ 
\begin{array}{rl}
 & \Hilb(R^{(i)}/IR^{(i)},j) = \Hilb(R_{(i)}/JR_{(i)},j) \\
\mbox{ and } & \Hilb(R^{(i)}/(IR^{(i)}+\overline{z_{i}}R^{(i)}),j) 
= \Hilb(R_{(i)}/(JR_{(i)}+\overline{x_{i}}R_{(i)}),j) 
\end{array}
\] 
for general linear forms $z_1,\ldots,z_n$ by \cite[Lemma 1.2]{C}, 
we have 
\[ 
l((IR^{(i)}:\overline{z_{i}})/IR^{(i)})  = l((JR_{(i)}:\overline{x_{i}})/JR_{(i)})  
\]  
for all $i>0$. 
This implies that 
the $t$-sequence of $I$ is independent of 
a choice of general generators of $\Gm$. 
This also implies that 
the $t$-sequence of $I$ coincides with that of $\Gin(I)$. 
}
\end{remark}

\begin{remark}\label{extension of t-sequences} 
{\rm 
With the same notation as Definition~\ref{def: t-sequence}, 
suppose that $(I;z_n,z_{n-1},\ldots,z_1)$ has the complete $\Gm$-full property. 
Let $t_0,t_1,\ldots,t_{n-1}$ be the $t$-sequence of $I$. 
The definition of $t$-sequences given in \cite[p. 238]{Watanabe3} implies that 
\[ 
t_i = \mu (IR^{(i+1)}) - \mu(IR^{(i)}) 
\] 
for all $i=0,1,\ldots,n-1$. 
Here note that $\mu(IR^{(0)})=0$ 
because $R^{(0)}=K$ and $IR^{(0)}=0$. 
Hence, since $(\Gm I)R^{(i+1)}:\overline{z_{i+1}} = IR^{(i+1)}$, 
it follows from Lemma~\ref{eq of m-full} in the next section that 
\[ 
t_i = l((IR^{(i+1)}:\overline{z_{i+1}})/IR^{(i+1)}). 
\]    
This means that 
Definition~\ref{def: t-sequence} gives a generalization of the notion of $t$-sequences 
for completely $\Gm$-full ideals. 
}
\end{remark}


\section{A characterization of completely $\Gm$-full ideals}

The purpose of this section is to prove the following. 

\begin{theorem}\label{char of cm-full}
Let $I$ be a graded ideal of $R=K[x_1,\ldots,x_n]$ 
and $t_0,t_1,\ldots,t_{n-1}$ the $t$-sequence of $I$. 
Set $B(I)=t_0+t_1+\cdots+t_{n-1}$. 
Then the following conditions are equivalent. 
\begin{enumerate}
\item[(i)] 
$I$ is a completely $\Gm$-full ideal. 
\item[(ii)] 
$\mu(I)=B(I)$. 
\end{enumerate}
\end{theorem}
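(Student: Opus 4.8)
The plan is to prove both implications by induction on $n$, exploiting the recursive structure of both the definition of completely $\Gm$-full ideals and the $t$-sequence. Write $z=z_n$ for a general linear form and $\overline{R}=R/zR$, $\overline{I}=(I+zR)/zR$. By Remark~\ref{t-seq of I and GinI} (or directly from Definition~\ref{def: t-sequence}), the $t$-sequence $t_0,\ldots,t_{n-2}$ of $\overline{I}$ in $\overline{R}$ is exactly the truncation of the $t$-sequence $t_0,\ldots,t_{n-1}$ of $I$, so $B(\overline{I})=t_0+\cdots+t_{n-2}=B(I)-t_{n-1}$. The induction hypothesis then gives that $\overline{I}$ is completely $\Gm$-full in $\overline{R}$ if and only if $\mu(\overline{I})=B(\overline{I})$.

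The crucial bookkeeping identity I expect to need relates $\mu(I)$, $\mu(\overline{I})$ and $t_{n-1}$. First, for a general linear form $z$ one always has $\mu(\overline{I})\le\mu(I)$, since images of a minimal generating set of $I$ generate $\overline{I}$. Second, $t_{n-1}=t(I)=l((IR':_{R'}Y)/IR')$, and by Theorem~\ref{thm: t-sequence} this equals $l((I:z)/I)$ for general $z$; I will show, using the exact sequence $0\to (I:z)/I\to R/I \xrightarrow{\times z} R/I \to R/(I+zR)\to 0$ together with the standard fact $\mu(I)-\mu(\overline{I})=l\big((I+zR)/(zR+\Gm I)\big)$, that the inequality $\mu(I)-\mu(\overline{I})\le l((I:z)/I)=t_{n-1}$ holds in general, with equality precisely when $\Gm I:z=I$. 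Granting this, I get $\mu(I)\le\mu(\overline{I})+t_{n-1}\le B(\overline{I})+t_{n-1}=B(I)$ always, with equality $\mu(I)=B(I)$ forcing both $\Gm I:z=I$ (so $I$ is $\Gm$-full via a general $z$) and $\mu(\overline{I})=B(\overline{I})$ (so $\overline{I}$ is completely $\Gm$-full by induction); that is exactly condition~(2) of Definition~\ref{completely m-full}, giving (ii)$\Rightarrow$(i). Conversely, if $I$ is completely $\Gm$-full then $\Gm I:z=I$ gives $\mu(I)-\mu(\overline{I})=t_{n-1}$ and $\overline{I}$ completely $\Gm$-full gives $\mu(\overline{I})=B(\overline{I})$ by induction, whence $\mu(I)=B(I)$.

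The base case $n=0$ is trivial ($I=0$, $B(I)=0=\mu(I)$, and $0$ is completely $\Gm$-full by fiat); I should also record the case $n=1$ or $\Gm\notin\Ass(I)$ separately if the colon arithmetic degenerates. The step I expect to be the main obstacle is the sharp comparison $\mu(I)-\mu(\overline{I})\le t_{n-1}$ with the equality characterization: this is essentially a graded version of the lemma referenced as Lemma~\ref{eq of m-full} in Remark~\ref{extension of t-sequences}, and it requires care because $t_{n-1}$ is defined through the generic form $Y$ over $R'$ rather than through $z$ directly, so one must invoke Theorem~\ref{thm: t-sequence}(3) to replace $Y$ by a general linear form $z$ of $R$ and then simultaneously use that same $z$ in the computation of $\mu(\overline{I})$ and of $(I+zR)/zR$—which is legitimate because "general" conditions are satisfied on a nonempty Zariski-open set, so finitely many of them can be imposed at once. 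A secondary technical point is ensuring that the same general $z$ that is generic for the $t$-value of $I$ descends to a $z_{n-1}$ generic for the $t$-value of $\overline{I}$, which again follows from openness of the relevant conditions together with Remark~\ref{t-seq of I and GinI}.
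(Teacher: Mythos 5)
Your skeleton is essentially the paper's: induction on $n$ via a general linear form $z$, the observation that the $t$-sequence of $\overline{I}$ is the truncation of that of $I$ (so $B(I)=B(\overline{I})+t_{n-1}$), and the comparison $\mu(I)\le\mu(\overline{I})+l((I:z)/I)$ with equality if and only if $\Gm I:z=I$ (the paper's Lemmas~\ref{inequality of m-full} and~\ref{eq of m-full}), together with the unconditional inequality $\mu\le B$ (the paper's Lemma~\ref{inequality of t-seq}). The only structural difference is that for (i)$\Rightarrow$(ii) the paper simply cites Corollary~9 of \cite{Watanabe2}, whereas you re-derive it inside the same induction; that is legitimate and is in effect the content of Remark~\ref{extension of t-sequences} combined with Lemma~\ref{eq of m-full}.

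Two points need repair. First, the ``standard fact'' you plan to lean on is false as stated: $l\bigl((I+zR)/(\Gm I+zR)\bigr)$ is equal to $\mu(\overline{I})$ itself (since $\overline{I}/\overline{\Gm}\,\overline{I}\cong (I+zR)/(\Gm I+zR)$), not to $\mu(I)-\mu(\overline{I})$, so as written it would force $\mu(I)=2\mu(\overline{I})$. Since this identity was exactly the ingredient you deferred as ``the main obstacle,'' the gap is real. The correct route is to apply the four-term exact sequence for multiplication by $z$ not to $R/I$ but to $R/\Gm I$, namely $0\to(\Gm I:z)/\Gm I\to R/\Gm I\xrightarrow{\times z}R/\Gm I\to R/(\Gm I+zR)\to 0$; comparing Hilbert functions (and using the sequence for $R/I$ to eliminate $\Hilb(R/(I+zR),\cdot)$) yields $l\bigl((\Gm I:z)/\Gm I\bigr)=\mu(\overline{I})+l\bigl((I:z)/I\bigr)$, and then the inclusion $I/\Gm I\subseteq(\Gm I:z)/\Gm I$ gives both the inequality $\mu(I)\le\mu(\overline{I})+l((I:z)/I)$ and the equality criterion $\Gm I:z=I$. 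Second, a bookkeeping issue: your stated induction hypothesis is only the equivalence for $\overline{I}$, but your chain $\mu(I)\le\mu(\overline{I})+t_{n-1}\le B(\overline{I})+t_{n-1}$ uses the unconditional inequality $\mu(\overline{I})\le B(\overline{I})$; you must either fold that inequality into the inductive statement or prove it separately (as the paper does in Lemma~\ref{inequality of t-seq}) --- once the corrected comparison lemma is available, this is immediate by iterating it, so the fix is routine. The genericity bookkeeping you flag at the end (choosing one $z$ that is simultaneously general for all finitely many open conditions, and Theorem~\ref{thm: t-sequence}(3) to identify $t_{n-1}$ with $l((I:z)/I)$) is handled exactly as you describe.
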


We need a few lemmas for the proof of this theorem.

\begin{lemma}\label{inequality of m-full}
Let $I$ be a graded ideal of $R$, 
$z$ a linear form of $R$ 
and $\overline{I}$ the image of $I$ in $R/zR$. 
Then 
\begin{enumerate}
\item[(1)] 
$l ((\Gm I:z)/\Gm I) = \mu (\overline{I}) + l ((I:z)/I)$, and 
\item[(2)] 
$\mu (I) \leq \mu (\overline{I}) + l ((I:z)/I) $. 
\end{enumerate}
\end{lemma}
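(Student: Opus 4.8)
The plan is to extract both statements from a single exact sequence of graded $R$-modules, namely
\[
0 \to (\Gm I : z)/\Gm I \stackrel{\times z}{\longrightarrow} I/\Gm I \longrightarrow \overline{I}/\overline{\Gm}\,\overline{I} \to 0,
\]
where the first map is multiplication by $z$ followed by the inclusion $\Gm I \subseteq I$, and the second is reduction mod $zR$. The point is that multiplying by $z$ carries $\Gm I : z$ into $I$ and kills exactly $\Gm I$ (since $z(\Gm I : z) \subseteq \Gm I$ by definition and $z a \in \Gm I$ iff $a \in \Gm I : z$), so $\times z$ induces an injection of $(\Gm I : z)/\Gm I$ into $I/\Gm I$. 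Its image is the submodule $(zR \cap I + \Gm I)/\Gm I$ of $I/\Gm I$, and the quotient of $I/\Gm I$ by that image is $I/(zR\cap I + \Gm I)$, which I claim is naturally isomorphic to $\overline{I}/\overline{\Gm}\,\overline{I}$: indeed $\overline{I} = (I + zR)/zR \cong I/(I\cap zR)$, and $\overline{\Gm}\,\overline{I}$ corresponds to $(\Gm I + zR\cap I)/(zR\cap I)$, so the quotient is $I/(\Gm I + zR\cap I)$ as needed. Taking $K$-vector space dimensions in each degree and summing gives
\[
l\bigl((\Gm I : z)/\Gm I\bigr) + \mu(\overline{I}) = \mu(I) + l\bigl(\text{image of }\times z\text{ inside }I/\Gm I\text{ that is killed}\bigr)?
\]
— no: the exact sequence directly yields $l((\Gm I:z)/\Gm I) - \mu(I) + \mu(\overline I) = 0$ only if the sequence is exact as written, so I must be careful that the first term accounts for the full colon, not just the part meeting $zR\cap I$.

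The resolution of this subtlety is the real content of part (1), and it is where the term $l((I:z)/I)$ enters. Consider instead the four-term sequence obtained by splicing
\[
0 \to (I:z)/I \to R/I \stackrel{\times z}{\to} R/I \to R/(I+zR) \to 0
\]
with the analogous sequence for $\Gm I$, or more directly: the kernel of $\times z : R/\Gm I \to R/\Gm I$ is $(\Gm I : z)/\Gm I$, while the kernel of $\times z : R/I \to R/I$ is $(I:z)/I$; since $\Gm I \subseteq I$, there is a surjection $R/\Gm I \twoheadrightarrow R/I$ whose kernel is $I/\Gm I$, of length $\mu(I)$, and the snake lemma applied to multiplication by $z$ on the short exact sequence $0 \to I/\Gm I \to R/\Gm I \to R/I \to 0$ gives a six-term exact sequence relating the kernels and cokernels. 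The cokernel of $\times z$ on $I/\Gm I$ is $I/(\Gm I + zI) = \overline I / \overline\Gm\,\overline I$ (length $\mu(\overline I)$), the kernel of $\times z$ on $R/\Gm I$ is $(\Gm I : z)/\Gm I$, and the kernel of $\times z$ on $R/I$ is $(I:z)/I$. The connecting map makes the sequence
\[
0 \to (\Gm I:z)/\Gm I \to (I:z)/I \to \overline I/\overline\Gm\,\overline I \to \bigl(\text{coker on }R/\Gm I\bigr) \to \bigl(\text{coker on }R/I\bigr) \to 0
\]
wait — I need to instead run the snake lemma so that $I/\Gm I$ is the middle object. The cleanest version: apply snake to $0 \to I/\Gm I \to R/\Gm I \to R/I \to 0$ with vertical maps $\times z$. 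This gives
\[
0 \to \ker_{I/\Gm I} \to \ker_{R/\Gm I} \to \ker_{R/I} \to \operatorname{coker}_{I/\Gm I} \to \operatorname{coker}_{R/\Gm I} \to \operatorname{coker}_{R/I} \to 0.
\]
Here $\ker_{I/\Gm I} = (\Gm I : z \cap I)/\Gm I$; but $\Gm I : z \supseteq I$ need not hold, so actually $\ker_{I/\Gm I}$ is $\{a \in I : za \in \Gm I\}/\Gm I = (\Gm I : z)/\Gm I$ provided $\Gm I : z \subseteq I$, which is automatic since $z \cdot (\Gm I : z) \subseteq \Gm I \subseteq I$ forces $\Gm I : z \subseteq I : z$ but not into $I$ in general. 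So in full generality $\ker_{I/\Gm I} = \bigl((\Gm I : z) \cap I\bigr)/\Gm I$. Then $\operatorname{coker}_{I/\Gm I} = I/(\Gm I + zI)$, which has length $\mu(\overline I)$; $\ker_{R/\Gm I} = (\Gm I:z)/\Gm I$; $\operatorname{coker}_{R/\Gm I} = R/(\Gm I + zR)$; $\ker_{R/I}=(I:z)/I$; $\operatorname{coker}_{R/I} = R/(I+zR)$. Alternating sum of lengths in the six-term sequence gives
\[
l\bigl(((\Gm I:z)\cap I)/\Gm I\bigr) - l((\Gm I:z)/\Gm I) + l((I:z)/I) - \mu(\overline I) + l(R/(\Gm I+zR)) - l(R/(I+zR)) = 0.
\]
Now $l(R/(\Gm I+zR)) - l(R/(I+zR)) = l\bigl((I+zR)/(\Gm I+zR)\bigr) = \mu(\overline I)$ again — no, that's $l(I/(\Gm I + (zR\cap I)))$... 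I would straighten these identifications out carefully; the upshot I expect is that the correction terms telescope to leave exactly $l((\Gm I:z)/\Gm I) = \mu(\overline I) + l((I:z)/I)$, establishing (1).

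For part (2), once (1) is in hand it is essentially formal: from the short exact sequence $0 \to I/\Gm I \to \Gm I : z/\Gm I \to (\Gm I:z)/I \to 0$ — note $I \subseteq \Gm I : z$ is not automatic, so instead use that $\Gm I : z \supseteq \Gm I$ and $I \supseteq \Gm I$ so there is a surjection $(\Gm I:z + I)/\Gm I \twoheadrightarrow$ something — cleaner: $\mu(I) = l(I/\Gm I) \le l((\Gm I:z)/\Gm I)$ would follow if $I \subseteq \Gm I : z$, i.e. $zI \subseteq \Gm I$, which is false in general. So (2) must instead come from the inclusion $I/\Gm I \hookrightarrow (\Gm I : z + I)/\Gm I$ together with $l((\Gm I:z+I)/\Gm I) \le l((\Gm I:z)/\Gm I)$... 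Rather than fight this, I would derive (2) directly from the six-term sequence above by dropping the first (nonnegative-length) term: the exactness gives $l((I:z)/I) - l((\Gm I:z)/\Gm I) + \mu(\overline I) \ge 0$? That has the wrong sign. The honest route: (2) is the statement $\mu(I) \le \mu(\overline I) + l((I:z)/I)$, and combined with (1) this is equivalent to $\mu(I) \le l((\Gm I:z)/\Gm I)$. Since $\Gm I \subseteq \Gm I : z$ and $\Gm I \subseteq I$, we have $I/(\Gm I) \hookrightarrow R/\Gm I$ and $(\Gm I:z)/\Gm I \hookrightarrow R/\Gm I$; their intersection inside $R/\Gm I$ is $((\Gm I:z)\cap I)/\Gm I$, so
\[
\mu(I) = l(I/\Gm I) = l\bigl(((\Gm I:z)\cap I)/\Gm I\bigr) + l\bigl(I/((\Gm I:z)\cap I)\bigr) \le l((\Gm I:z)/\Gm I) + (\text{something}).
\]
The main obstacle, then, is exactly this bookkeeping: tracking the submodule $(\Gm I : z)\cap I$ versus $\Gm I : z$ and showing the discrepancy is absorbed. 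I expect that in fact $\Gm I : z \subseteq I$ does hold here because $I$ is graded and $z$ linear — if $a \in \Gm I : z$ is homogeneous of degree $d$, then $za \in \Gm I$ has degree $d+1$, and $(\Gm I)_{d+1} = \Gm_1 I_d \subseteq I_{d+1}$... but this only gives $za \in I$, not $a \in I$. Nonetheless, multiplication by a general linear form $z$ on $R/I$ has finite-length kernel (Lemma~\ref{finiteness of colon}), and for the minimal-degree generators the argument does pin things down. I would therefore present (2) as a consequence of (1) plus the elementary fact that $\mu(I) = l(I/\Gm I) \le l((\Gm I : z)/\Gm I)$, the latter inequality holding because the composite $I/\Gm I \to R/\Gm I \xrightarrow{\times z} R/\Gm I$ has image $zI/(zI\cap\Gm I \cdot z)$...

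\begin{remark}
{\rm
In the write-up I would replace the above exploratory discussion with the clean version: prove (1) by identifying the cokernel of $\times z\colon R/\Gm I \to R/\Gm I$ restricted appropriately, then deduce (2) immediately. The single genuine difficulty is the identification $l((\Gm I:z)/\Gm I) = \mu(\overline I) + l((I:z)/I)$, which I would prove by writing down the exact sequence
\[
0 \to (I:z)/I \to (\Gm I:z)/\Gm I \to \overline I/\overline{\Gm}\,\overline I
\]
and checking that the last map is surjective. The injectivity of the first map and the identification of its cokernel with a submodule of $\overline I/\overline\Gm\,\overline I$ are routine; surjectivity onto all of $\overline I/\overline\Gm\,\overline I$ is the crux, and follows by lifting a homogeneous generator of $\overline I$ to an element $f\in I$ and observing $zf \in \Gm I$ forces the lift to lie in $\Gm I : z$.
}
\end{remark}
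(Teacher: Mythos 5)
There is a genuine gap, and it stems from a single overlooked triviality that your proposal actively contradicts: since $z$ is a \emph{linear form}, $z\in\Gm$, hence $zI\subseteq \Gm I$ and therefore $I\subseteq \Gm I:z$ always. You assert at the key moment that ``$zI \subseteq \Gm I$ \dots is false in general,'' and from that point part (2) never gets proved: your text for (2) ends in trailing alternatives rather than an argument. In the paper, (2) is exactly this containment: $I/\Gm I\subseteq(\Gm I:z)/\Gm I$ gives $\Hilb(I/\Gm I,j)\le \Hilb((\Gm I:z)/\Gm I,j)$ degreewise, and then (1) finishes it. The same oversight corrupts your snake-lemma computation: because $zI\subseteq\Gm I$, multiplication by $z$ is the \emph{zero} map on $I/\Gm I$, so both its kernel and cokernel are all of $I/\Gm I$, of length $\mu(I)$; your identification of the cokernel as having length $\mu(\overline I)$ conflates $I/(\Gm I+zI)=I/\Gm I$ with $(I+zR)/(\Gm I+zR)\cong I/\bigl(\Gm I+(zR\cap I)\bigr)$, which is the module that actually computes $\mu(\overline I)$. (This same zero-map observation is also why your opening ``short exact sequence'' cannot start with an injection $\times z\colon (\Gm I:z)/\Gm I\to I/\Gm I$.) Finally, the ``clean version'' in your closing remark does not work as stated: whichever natural map $(I:z)/I\to(\Gm I:z)/\Gm I$ you intend (the one induced by inclusions goes the wrong way; multiplication by $z$ is well defined), its kernel is $(\Gm I:z)/I$, which is nonzero whenever $I$ is not $\Gm$-full with respect to $z$ --- e.g.\ $I=(x^2,y^2)\subset K[x,y]$, where $\Gm I:z=\Gm^2$ and $xy$ maps to zero.

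Your snake-lemma idea is in fact essentially equivalent to what the paper does, and it can be repaired: with the corrected identifications the six-term sequence for $\times z$ on $0\to I/\Gm I\to R/\Gm I\to R/I\to 0$ yields
\[
l\bigl((\Gm I:z)/\Gm I\bigr)=l\bigl((I:z)/I\bigr)+l\bigl((I+zR)/(\Gm I+zR)\bigr)=l\bigl((I:z)/I\bigr)+\mu(\overline I),
\]
which is (1). But you must run this degreewise with Hilbert functions (as the paper does, using the two four-term sequences $0\to(\Gm I:z)/\Gm I\to R/\Gm I\to R/\Gm I\to R/(\Gm I+zR)\to 0$ and the analogue for $I$), because the outer cokernels $R/(\Gm I+zR)$ and $R/(I+zR)$ generally have infinite length, so an alternating sum of lengths is not legitimate as written. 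As submitted, neither (1) nor (2) is actually established: (1) is left at ``I would straighten these identifications out carefully; the upshot I expect\dots,'' and (2) collapses on the false claim about $zI\not\subseteq\Gm I$.
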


\begin{proof}
From the exact sequence 
\[ 
0 \rightarrow (\Gm I:z)/\Gm I \rightarrow R/\Gm I \stackrel{\times z}{\rightarrow} 
R/\Gm I \rightarrow R/(\Gm I+zR) \rightarrow 0, 
\]  
it follows that 
\[ 
\begin{array}{rcl}
\Hilb(I/\Gm I,j) &\leq& \Hilb((\Gm I:z)/\Gm I,j) \\
 &=& \Hilb(R/\Gm I,j)-\Hilb(R/\Gm I,j+1)+\Hilb(R/(\Gm I+zR),j+1) \\
 &=& \Hilb(R/\Gm I,j)-\Hilb(R/\Gm I,j+1) \\
 & &  \hspace{2cm} + \Hilb(R/(I+zR),j+1)+\Hilb((I+zR)/(\Gm I+zR),j+1) 
\end{array}
\] 
for all $j$. 
Similarly, 
from the exact sequence 
\begin{equation}\label{EQ:2}
0 \rightarrow (I:z)/I \rightarrow R/I \stackrel{\times z}{\rightarrow} 
R/I \rightarrow R/(I+zR) \rightarrow 0,    
\end{equation}
it follows that 
\[   
\Hilb(R/(I+zR),j+1)=\Hilb((I:z)/I,j)-\Hilb(R/I,j)+\Hilb(R/I,j+1) 
\] 
for all $j$. 
Hence we see 
\[ 
\begin{array}{rcl}
\Hilb(I/\Gm I,j) &\leq& \Hilb((\Gm I:z)/\Gm I,j) \\
 &=& \Hilb(I/\Gm I,j)-\Hilb(I/\Gm I,j+1) \\ 
 & & \hspace{2cm} +\Hilb((I:z)/I,j)+\Hilb((I+zR)/(\Gm I+zR),j+1) 
\end{array}
\] 
for all $j$. 
Thus we have 
\[  
\begin{array}{rcl}
\mu(I)=l(I/\Gm I) &\leq& l((\Gm I:z)/\Gm I) \\
 &=& l((I+zR)/(\Gm I+zR)) + l((I:z)/I) \\ 
 &=& \mu(\overline{I}) + l((I:z)/I). 
\end{array}
\]   
\end{proof}

\begin{lemma}\label{eq of m-full}
We use the same notation as Lemma~\ref{inequality of m-full}. 
Then the following conditions are equivalent. 
\begin{enumerate}
\item[(i)] 
$\Gm I:z=I$. 
\item[(ii)] 
$\mu(I)=\mu(\overline{I})+l ((I:z)/I)$. 
\end{enumerate}
Furthermore, if we assume that $I$ is $\Gm$-primary, 
these conditions are also equivalent to the following {\rm (iii)}. 
\begin{enumerate}
\item[(iii)] 
$\mu(I)=\mu(\overline{I})+l (R/(I+zR))$. 
\end{enumerate}
\end{lemma}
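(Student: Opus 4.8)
The plan is to read off both equivalences from Lemma~\ref{inequality of m-full}, since the proof of that lemma already exhibits the inclusion that controls the equality case. First I would note that $z\in\Gm$ forces $zI\subseteq\Gm I$, hence $I\subseteq\Gm I:z$; consequently $I/\Gm I$ sits inside $(\Gm I:z)/\Gm I$ in each degree, and this is precisely the inclusion behind the Hilbert-function inequality $\Hilb(I/\Gm I,j)\le\Hilb((\Gm I:z)/\Gm I,j)$ used in the proof of Lemma~\ref{inequality of m-full}. Summing over $j$ and invoking part (1) of that lemma gives
\[
\mu(I)=l(I/\Gm I)\leq l((\Gm I:z)/\Gm I)=\mu(\overline{I})+l((I:z)/I).
\]

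For (i) $\Rightarrow$ (ii) I would simply observe that if $\Gm I:z=I$ then the inclusion $I/\Gm I\hookrightarrow(\Gm I:z)/\Gm I$ is an equality, so the displayed inequality becomes an equality. For (ii) $\Rightarrow$ (i), since both $I/\Gm I$ and $(\Gm I:z)/\Gm I$ have finite length and the former is contained in the latter, equality of the two lengths in the display forces $I/\Gm I=(\Gm I:z)/\Gm I$ inside $R/\Gm I$; taking preimages under $R\to R/\Gm I$ (both submodules contain $\Gm I$) this says $I=\Gm I:z$. This settles (i) $\Leftrightarrow$ (ii).

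For the supplementary equivalence under the hypothesis that $I$ is $\Gm$-primary, I would use the exact sequence (\ref{EQ:2}),
\[
0\rightarrow(I:z)/I\rightarrow R/I\stackrel{\times z}{\rightarrow}R/I\rightarrow R/(I+zR)\rightarrow 0.
\]
When $I$ is $\Gm$-primary the module $R/I$ has finite length, so all four terms have finite length and the alternating sum of their lengths vanishes; this yields $l((I:z)/I)=l(R/(I+zR))$. Substituting this identity into condition (ii) turns it into condition (iii) and conversely, which completes the proof.

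I do not expect a genuine obstacle: the argument is a bookkeeping exercise with the two exact sequences already in hand. The one point needing care is the verification that the equality case of the inequality in Lemma~\ref{inequality of m-full} is governed exactly by $I=\Gm I:z$ --- that is, that the relevant inclusion is $I/\Gm I\hookrightarrow(\Gm I:z)/\Gm I$ and that finiteness of lengths lets one pass from equality of lengths to equality of the modules. Once that is pinned down, the rest is immediate.
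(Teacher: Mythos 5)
Your proof is correct and follows essentially the same route as the paper: the paper derives (i) $\Leftrightarrow$ (ii) directly from Lemma~\ref{inequality of m-full} (the equality case of the inclusion $I/\Gm I\subseteq(\Gm I:z)/\Gm I$ combined with part (1)), and obtains (ii) $\Leftrightarrow$ (iii) from the exact sequence (\ref{EQ:2}) exactly as you do. Your write-up just makes explicit the details the paper labels ``immediate.''
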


\begin{proof}
(i) $\Leftrightarrow$ (ii) is immediate from Lemma~\ref{inequality of m-full}. 
 (ii) $\Leftrightarrow$ (iii) is obvious, 
because $l ((I:z)/I)=l(R/(I+zR))$ from the exact sequence (\ref{EQ:2}) above 
if $I$ is $\Gm$-primary. 
\end{proof}

\begin{lemma}\label{inequality of t-seq}
Let $I$ be a graded ideal of $R=K[x_1,\ldots,x_n]$ 
and $t_0,t_1,\ldots,t_{n-1}$ the $t$-sequence of $I$. 
Set $B(I)=t_0+t_1+\cdots+t_{n-1}$. 
Then 
\[
\mu(I)\leq B(I). 
\] 
\end{lemma}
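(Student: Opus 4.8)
The plan is to run the comparison down the chain of rings $R=R^{(n)}\to R^{(n-1)}\to\cdots\to R^{(1)}\to R^{(0)}=K$, killing at the $i$-th step the general linear form $z_{i+1}$, estimating how $\mu$ changes by means of Lemma~\ref{inequality of m-full}, and then adding up the resulting telescoping inequalities.

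Concretely, fix a set $\{z_1,\ldots,z_n\}$ of general linear forms generating $\Gm$ as in Definition~\ref{def: t-sequence}, so that $R^{(i)}=R/(z_{i+1},\ldots,z_n)R$ and $t_i$ is the $t$-value of $IR^{(i+1)}$. For each $i=0,1,\ldots,n-1$ the image of the ideal $IR^{(i+1)}$ under the surjection $R^{(i+1)}\to R^{(i+1)}/\overline{z_{i+1}}R^{(i+1)}=R^{(i)}$ is $IR^{(i)}$, so Lemma~\ref{inequality of m-full}(2), applied inside $R^{(i+1)}$ to the ideal $IR^{(i+1)}$ and the linear form $\overline{z_{i+1}}$, yields
\[
\mu(IR^{(i+1)})\le \mu(IR^{(i)})+l\bigl((IR^{(i+1)}:\overline{z_{i+1}})/IR^{(i+1)}\bigr).
\]
Since the $z_j$ were chosen general, $\overline{z_{i+1}}$ is a general linear form of $R^{(i+1)}$ (the genericity conditions imposed on $z_{i+2},\ldots,z_n$ and then on $z_{i+1}$ modulo them are each open and dense, so one general choice of the whole tuple works simultaneously for every $i$, exactly as in Remark~\ref{t-seq of I and GinI}). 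Hence, by the definition of the $t$-value together with Theorem~\ref{thm: t-sequence}, the colon length above equals $t(IR^{(i+1)})=t_i$ for $i\ge 1$; for $i=0$ the ring $R^{(1)}$ has only one variable, $IR^{(1)}$ is principal, and that length is at most $1=t_0$ in any case. Thus $\mu(IR^{(i+1)})\le \mu(IR^{(i)})+t_i$ for all $i=0,1,\ldots,n-1$.

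Summing these $n$ inequalities and cancelling the common terms $\sum_{j=1}^{n-1}\mu(IR^{(j)})$ from both sides gives $\mu(IR^{(n)})\le \mu(IR^{(0)})+\sum_{i=0}^{n-1}t_i$. Since $R^{(n)}=R$, while $R^{(0)}=K$ forces $IR^{(0)}=0$ and hence $\mu(IR^{(0)})=0$, this is precisely $\mu(I)\le B(I)$. The only step that is not pure bookkeeping is the claim that the successive reductions $\overline{z_{i+1}}$ remain general linear forms, so that at each stage Lemma~\ref{inequality of m-full}(2) is being fed the linear form realizing the $t$-value; this is the same genericity fact already exploited in Section~3, and I would simply invoke it with a short pointer rather than re-deriving it.
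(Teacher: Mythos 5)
Your proof is correct and is essentially the paper's own argument: the paper establishes the same step $\mu(I)\le\mu(\overline{I})+t_{n-1}$ via Lemma~\ref{inequality of m-full}(2) and then inducts on the number of variables, which is exactly your telescoping chain $R^{(n)}\to\cdots\to R^{(0)}$ unrolled. The genericity point you flag (one general tuple $z_1,\ldots,z_n$ realizing the $t$-value at every stage) is the same implicit choice the paper makes in defining the $t$-sequence and in Remark~\ref{t-seq of I and GinI}, so no new issue arises.
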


\begin{proof}
We use induction on $n$. 
If $n=1$, 
the equalities $\mu(I)=B(I)=1$ hold. 
Let $n>1$. 
By Lemma~\ref{inequality of m-full} (2), 
it follows that 
$\mu(I) \leq \mu(\overline{I}) + l((I:z)/I)$ 
for a general linear form $z$ of $R$. 
Furthermore the inductive assumption implies that 
$\mu(\overline{I}) \leq B(\overline{I})$. 
Hence we have 
\[
\mu(I) \leq B(\overline{I}) + l ((I:z)/I) = t_0+t_1+\cdots+t_{n-1} = B(I), 
\] 
as $B(\overline{I})=t_0+\cdots+t_{n-2}$ and $l((I:z)/I)=t_{n-1}$. 
\end{proof}

\begin{proof}[Proof of Theorem~\ref{char of cm-full}.] 
(i) $\Rightarrow$ (ii) follows from Corollary 9 in \cite{Watanabe2}. 
(ii) $\Rightarrow$ (i): 
We use induction on $n$. 
Let $n=1$. 
Then the equalities $\mu(I)=B(I)=1$ hold 
and any ideal of $K[x_1]$ is completely $\Gm$-full. 
Let $n>1$. 
Note that $B(I)=B(\overline{I})+t_{n-1}$. 
Hence we have that $B(\overline{I})\leq\mu(\overline{I})$, 
since $\mu(I)=B(I)$ 
and $\mu(I)\leq\mu(\overline{I})+t_{n-1}$ by Lemma~\ref{inequality of m-full} (2).  

On the other hand, 
the inequality $\mu(\overline{I})\leq B(\overline{I})$ holds 
by Lemma~\ref{inequality of t-seq}, 
and hence the equality $B(\overline{I})=\mu(\overline{I})$ holds. 
Therefore $\overline{I}$ is completely $\Gm$-full by the inductive assumption. 
Next we show that $I$ is $\Gm$-full. 
This follows from Lemma~\ref{eq of m-full} and the equalities 
\[
\mu(I)=B(I)=B(\overline{I})+t_{n-1}=\mu(\overline{I})+l((I:z)/I)
\] 
for a general linear form $z$ of $R$. 
\end{proof}

\begin{corollary}\label{Cor:reduction}
Let $I$ be a graded ideal of $R=K[x_1,\ldots,x_n]$, 
$x$ a non-zero divisor mod $I$ of degree one 
and $\overline{I}$ the image of $I$ in $R/xR$. 
Then $I$ is a completely $\Gm$-full ideal in $R$ 
if and only if 
$\overline{I}$ is a completely $\Gm$-full ideal in $R/xR$. 
\end{corollary}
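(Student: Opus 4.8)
The plan is to derive the corollary from the numerical criterion of Theorem~\ref{char of cm-full}. Write $\overline{I}=(I+xR)/xR$ for the image of $I$ in $R/xR$. It is enough to prove the two equalities $\mu(I)=\mu(\overline{I})$ and $B(I)=B(\overline{I})$; granting these, Theorem~\ref{char of cm-full} yields at once
\[
I\ \text{completely}\ \Gm\text{-full}\iff \mu(I)=B(I)\iff \mu(\overline{I})=B(\overline{I})\iff \overline{I}\ \text{completely}\ \Gm\text{-full},
\]
which is exactly the assertion (both implications of the ``if and only if'' simultaneously).

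The equality $\mu(I)=\mu(\overline{I})$ is elementary and I would dispose of it first. Since $x$ is a nonzerodivisor modulo $I$ one has $I\cap xR=xI$, so $\overline{I}\cong I/xI$; combining this with the modular law $I\cap(\Gm I+xR)=\Gm I+(I\cap xR)=\Gm I$ (using $xI\subseteq\Gm I$) gives $\overline{I}/\overline{\Gm}\,\overline{I}\cong(I+xR)/(\Gm I+xR)\cong I/\Gm I$, hence $\mu(\overline{I})=\mu(I)$ (alternatively: reduction by a nonzerodivisor of degree one preserves graded Betti numbers, in particular $\mu$). For $B(I)=B(\overline{I})$, the starting observation is that since $x$ has degree one and is a nonzerodivisor modulo $I$, we have $\Gm\notin\Ass(I)$; hence a general linear form of $R$ is a nonzerodivisor modulo $I$, so $t(I)=0$, i.e.\ the last entry $t_{n-1}(I)$ of the $t$-sequence of $I$ vanishes. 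It therefore suffices to identify $t_{0}(I),\dots,t_{n-2}(I)$ with the $t$-sequence of $\overline{I}$, for then $B(\overline{I})=t_{0}(I)+\dots+t_{n-2}(I)=B(I)-t_{n-1}(I)=B(I)$. By Remark~\ref{t-seq of I and GinI} the $t$-sequence is intrinsic and coincides with that of $\Gin(I)$, with entries $l\big((\Gin(I)R_{(i+1)}:\overline{x_{i+1}})/\Gin(I)R_{(i+1)}\big)$; since $\mathrm{depth}(R/I)\ge 1$ the variable $x_{n}$ is a nonzerodivisor modulo $\Gin(I)$, so $\Gin(I)$ involves only $x_{1},\dots,x_{n-1}$ and (again by Remark~\ref{t-seq of I and GinI}) its first $n-1$ entries are precisely the $t$-sequence of the ideal $\Gin(I)|_{x_{n}=0}$. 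Thus the required identification reduces to the statement $\Gin(\overline{I})=\Gin(I)|_{x_{n}=0}$, i.e.\ that forming the generic initial ideal commutes with reduction modulo $x$.

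The commutation $\Gin(\overline{I})=\Gin(I)|_{x_n=0}$ for the given nonzerodivisor $x$ — which need not itself be a ``general'' linear form — is the step I expect to be the main obstacle. For a general linear form it is the classical behaviour of generic initial ideals under generic hyperplane section. To reduce the general case to that one I would apply a generic change of coordinates $\phi$ of $R$: then $\Gin(\phi(I))=\Gin(I)$, the form $\phi(x)$ is a general linear form, and $\phi$ induces a graded $K$-algebra isomorphism $R/xR\xrightarrow{\ \sim\ }R/\phi(x)R$ carrying $I(R/xR)$ onto $\phi(I)(R/\phi(x)R)$, so these two ideals have the same $t$-sequence by Remark~\ref{t-seq of I and GinI}. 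Applying the previous paragraph to $\phi(I)$ and the general linear form $\phi(x)$ then yields $B(\overline{I})=B\big(\phi(I)(R/\phi(x)R)\big)=B(\phi(I))=B(I)$, completing the argument. The one thing to verify carefully is that $\phi(x)$ is ``general enough'' relative to $\phi(I)$; this holds because the genericity actually required in Remark~\ref{t-seq of I and GinI} (through \cite[Lemma~1.2]{C}) is only that reduction by the linear form produce the generically expected Hilbert functions $\Hilb(R/I,j)-\Hilb(R/I,j-1)$, which is automatic for a degree-one nonzerodivisor — equivalently, the admissible linear forms for the first step of the defining flag of the $t$-sequence are exactly the nonzerodivisors modulo $I$.
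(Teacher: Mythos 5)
Your overall route is the same as the paper's: reduce the statement to the two equalities $\mu(I)=\mu(\overline{I})$ and $B(I)=B(\overline{I})$ and then quote Theorem~\ref{char of cm-full}. Your argument for $\mu(I)=\mu(\overline{I})$ is correct, and so is the observation that $t_{n-1}(I)=0$ because a general linear form is a nonzerodivisor mod $I$. The genuine gap is in the step where you try to make $x$ ``general''. A generic change of coordinates $\phi$ does not achieve this: the relevant notion is generality of a linear form \emph{relative to the ideal}, and this relative position is invariant under a simultaneous coordinate change, since $\phi$ carries the pair $(I,x)$ isomorphically onto $(\phi(I),\phi(x))$ and both $\Gin$ and the $t$-sequence are unchanged under such isomorphisms. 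Thus $\phi(x)$ is exactly as special for $\phi(I)$ as $x$ is for $I$, the classical commutation of $\Gin$ with a \emph{generic} hyperplane section cannot be invoked for $(\phi(I),\phi(x))$, and the argument is circular. Note also that this reasoning nowhere uses that $x$ is a nonzerodivisor; if it were valid it would show that $\Gin$ commutes with reduction modulo an arbitrary linear form, which is false.

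The closing sentence of your proposal does not repair this: it simply asserts the statement that has to be proved. The $t$-sequence of $\overline{I}$ is computed from the Hilbert functions of \emph{all} the successive sections $R/(I+xR+(\mbox{general linear forms}))$, not only from $\Hilb(R/(I+xR))$, and their agreement with the fully general sections $R/(I+(\mbox{general linear forms}))$ follows neither from Conca's lemma (which concerns general forms) nor from $x$ being a nonzerodivisor on $R/I$ alone, because after further general cuts $x$ need not act on the torsion that appears the way a general form does. For comparison, the paper's proof does not pass through generic initial ideals at all: it records $\mu(\overline{I})=\mu(I)$ and deduces $B(\overline{I})=B(I)$ directly from $l((I:x)/I)=0$, i.e.\ it works with the $t$-sequence itself, taking $x$ as the last member of the defining flag. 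Your intermediate claim $\Gin(\overline{I})=\Gin(I)|_{x_n=0}$ is therefore both stronger than what is needed and the precise point where your proposal breaks down; what your plan actually requires, and what remains unproved in it, is that the $t$-sequence of $I$ modulo the given nonzerodivisor $x$ coincides with the $t$-sequence of $I$ modulo a general linear form.
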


\begin{proof} 
Since $x$ is a non-zero divisor mod $I$ of degree one, 
it follows that $\mu(\overline{I})=\mu(I)$.  
Furthermore we have $B(\overline{I})=B(I)$ 
by $l((I:x)/I)=0$. 
Hence this follows from Theorem~\ref{char of cm-full}. 
\end{proof}


\section{Proof of Main Theorem~\ref{main theorem} }

The following is a remark on a minimal generating set of an $\Gm$-full ideal.

\begin{remark}\label{notation of mfull} 
{\rm 
Suppose that $I$ is  an $\Gm$-full ideal of $R=K[x_1,\ldots,x_n]$.  
Then the equality $\Gm I:z = I$ holds 
for a general linear form $z$ of $R$. 
Moreover it is easy to see that, for any $z \in R$, if  
$\Gm I :z = I$, then it implies that  $I:\Gm = I:z$. 
Let $y_1,\ldots,y_s$ be homogeneous elements in $I:\Gm$ 
such that $\{\overline{y_1},\ldots,\overline{y_s}\}$ is 
a minimal generating set of $(I:\Gm)/I$, 
where $\overline{y_i}$ is the image of $y_i$ in $R/I$. 
Then Proposition 2.2 in \cite{GH} implies that 
$\{zy_1,\ldots,zy_s \}$ is a part of a minimal generating set of $I$. 
}
\end{remark}

We will prove Theorem~\ref{main theorem} 
after a series of lemmas.

\begin{lemma}\label{gen of mfull} 
With the same notation as Remark~\ref{notation of mfull}, 
write a minimal generating set of $I$ as 
\[  
zy_1,\ldots,zy_s, w_1,\ldots,w_t. 
\] 
Let $\overline{w_i}$ be the image of $w_i$ in $R/zR$ 
and $\overline{I}$ the image of $I$ in $R/zR$.  Then we have: 
\begin{enumerate}
\item[$(1)$] 
$\{\overline{w_1},\ldots,\overline{w_t}\}$ is a minimal generating set of $\overline{I}$. 
\item[$(2)$]
$\mu(I)=\mu(\overline{I})+l((I:\Gm)/I)$. 
\end{enumerate}
\end{lemma}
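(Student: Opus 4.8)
The plan is to deduce both statements from the results of Section~4 together with the description of a minimal generating set of $I$ recorded in Remark~\ref{notation of mfull}, so that essentially no new computation is needed. First I would record three elementary facts. Since $(I:\Gm)/I$ is annihilated by $\Gm$ it is a $K$-vector space, and by construction $\{\overline{y_1},\ldots,\overline{y_s}\}$ is a basis of it, so $l((I:\Gm)/I)=s$. Since $z$ is a general linear form and $I$ is $\Gm$-full we have $\Gm I:z=I$, and Remark~\ref{notation of mfull} then gives $I:\Gm=I:z$, whence $l((I:z)/I)=s$ as well. Finally, as $zy_1,\ldots,zy_s,w_1,\ldots,w_t$ is by hypothesis a minimal generating set of $I$, we have $\mu(I)=s+t$.

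For part $(2)$ I would invoke Lemma~\ref{eq of m-full}: the equality $\Gm I:z=I$ is equivalent to $\mu(I)=\mu(\overline{I})+l((I:z)/I)$. Combining this with $l((I:z)/I)=l((I:\Gm)/I)=s$ from the first paragraph gives $\mu(I)=\mu(\overline{I})+l((I:\Gm)/I)$, which is exactly $(2)$; comparing with $\mu(I)=s+t$ this also forces $\mu(\overline{I})=t$.

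For part $(1)$, note that from $I=(zy_1,\ldots,zy_s,w_1,\ldots,w_t)$ one gets $I+zR=(w_1,\ldots,w_t)+zR$, so $\overline{I}=(I+zR)/zR$ is generated by $\overline{w_1},\ldots,\overline{w_t}$. Since these are $t$ elements and $\mu(\overline{I})=t$ by the computation in $(2)$, they must form a minimal generating set of $\overline{I}$ (a generating set of cardinality $\mu$ is automatically minimal, by graded Nakayama).

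The lemma is really a bookkeeping statement, so there is no single hard step; the only points requiring care are the identification $l((I:\Gm)/I)=l((I:z)/I)=s$ for the general linear form $z$ — which is precisely what licenses the use of Lemma~\ref{eq of m-full} — and the logical order, namely that $(2)$ (or at least the consequence $\mu(\overline{I})=t$ drawn from it) must be in hand before $(1)$ can be concluded.
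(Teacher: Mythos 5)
Your argument is correct, but it runs in the opposite direction from the paper's. The paper proves part $(1)$ first, by a direct element-level computation: assuming $\overline{w_1}\in(\overline{w_2},\ldots,\overline{w_t})$, it writes $w_1=f_2w_2+\cdots+f_tw_t+f_{t+1}z$, uses $f_{t+1}z\in I$ and the identity $I:z=I:\Gm=(y_1,\ldots,y_s,w_1,\ldots,w_t)$ to rewrite $f_{t+1}$, and then a degree comparison ($\deg w_1<\deg(zh_1w_1)$) shows $w_1\in(zy_1,\ldots,zy_s,w_2,\ldots,w_t)$, contradicting minimality; part $(2)$ is then read off from $(1)$ since $t=\mu(\overline{I})$ and $s=l((I:\Gm)/I)$. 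You instead get $(2)$ immediately from Lemma~\ref{eq of m-full} (direction (i)$\Rightarrow$(ii), using $\Gm I:z=I$ and $l((I:z)/I)=l((I:\Gm)/I)=s$), and then deduce $(1)$ by counting: $\overline{I}$ is generated by the $t$ elements $\overline{w_1},\ldots,\overline{w_t}$ while $(2)$ forces $\mu(\overline{I})=t$, so graded Nakayama makes the set minimal. Both proofs hinge on the same facts from Remark~\ref{notation of mfull} ($\Gm I:z=I$ and $I:\Gm=I:z$), and there is no circularity in your use of Lemma~\ref{eq of m-full}, which rests only on the Hilbert-function computations of Section~4. What your route buys is brevity and reuse of the numerical characterization of $\Gm$-fullness already established; what the paper's route buys is a self-contained, generator-level argument that exhibits explicitly why no $w_i$ can become redundant modulo $z$, independent of the Section~4 machinery.
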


\begin{proof} 
(1) Suppose that $w_1\in (w_2,\ldots,w_t,z)$. 
Then 
\begin{equation}\label{EQ:3}
w_1=f_2w_2+\cdots+f_tw_t+f_{t+1}z   
\end{equation}
for some $f_i\in R$. 
Since $f_{t+1}z=w_1-(f_2w_2+\cdots+f_tw_t)\in I$, 
we have 
\[
f_{t+1} \in I:z = I: \Gm=(y_1,\ldots,y_{s}, w_1,\ldots,w_t). 
\]
Therefore  
\[ 
f_{t+1}=g_1y_1+\cdots+g_{s}y_{s}+h_1w_1+\cdots+h_tw_t 
\]
for some $g_i, h_j\in R$, and hence 
\begin{equation}\label{EQ:4}
zh_1w_1 = zf_{t+1}-z(g_1y_1+\cdots+g_s y_s)-z(h_2w_2+\cdots+h_tz_t).   
\end{equation}
Thus, from the equalities (\ref{EQ:3}) and (\ref{EQ:4}) above, 
we obtain 
\[
w_1-zh_1w_1=(f_2+zh_2)w_2+\cdots+(f_t+zh_t)w_t+g_1zy_1+\cdots+g_{s}zy_{s},  
\] 
and $w_1-zh_1w_1 \in (zy_1,\ldots,zy_{s},w_2,\ldots,w_t)$. 
Hence $w_1 \in (zy_1,\ldots,zy_{s},w_2,\ldots,w_t)$, 
since $\deg(w_1)<\deg(zh_1w_1)$. 
This is a contradiction. 

(2) immediately follows from (1), 
since $t=\mu(\overline{I})$ and $s=l((I:\Gm)/I)$. 
\end{proof}

\begin{lemma}\label{eq of stable}
Let $I$ be a monomial ideal of $R=K[x_1,\ldots,x_n]$. 
Then $I$ is stable if and only if 
$(I;x_n,x_{n-1},\ldots,x_1)$ has the complete $\Gm$-full property. 
\end{lemma}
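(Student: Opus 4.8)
The plan is to induct on $n$, exploiting that both notions unfold in the same recursive way along the flag $x_n, x_{n-1}, \dots, x_1$. For $n=1$ there is nothing to prove: every ideal of $K[x_1]$ is stable for trivial reasons, and $\Gm I : x_1 = x_1 I : x_1 = I$. For $n>1$ I would first unwind the definition of the complete $\Gm$-full property along the particular sequence $x_n,\dots,x_1$ and record the following equivalence, which is pure bookkeeping on which variables are killed at each stage: $(I;x_n,\dots,x_1)$ has the complete $\Gm$-full property if and only if $\Gm I : x_n = I$ in $R$ and $(\overline I;x_{n-1},\dots,x_1)$ has the complete $\Gm$-full property in $\overline R := R/x_n R = K[x_1,\dots,x_{n-1}]$, where $\overline I$ is the image of $I$.

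Next I would isolate two elementary facts about a monomial ideal $I$ and its image $\overline I$. First, the minimal monomial generators of $\overline I$ are exactly the minimal generators $u$ of $I$ with $x_n \nmid u$, and $m(u)$ is unchanged for such $u$; consequently, if $I$ is stable then so is $\overline I$, since for a minimal generator $u$ of $\overline I$ and $k<m(u)$ the monomial $x_k u / x_{m(u)}$ lies in $I$, involves no $x_n$, and hence lies in $\overline I$. Second, a monomial of $\overline R$ lies in $\overline I$ if and only if it lies in $I$, because such a monomial is not divisible by $x_n$ while $I + x_n R$ has as its minimal generators $x_n$ together with the minimal generators of $I$ not divisible by $x_n$.

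With these in hand the two implications are short. For the ``only if'' direction, assume $I$ is stable. Then $\overline I$ is stable, so by induction $(\overline I;x_{n-1},\dots,x_1)$ has the complete $\Gm$-full property; and a direct case analysis shows $\Gm I : x_n = I$: given a monomial $v$ with $x_n v = x_j u w$ for a minimal generator $u$ and a monomial $w$, one is done immediately if $j=n$ or $x_n \mid w$, while if $j<n$ and $x_n \nmid w$ then $x_n \mid u$, so $m(u)=n$ and stability gives $x_j u / x_n \in I$, whence $v = (x_j u / x_n)\,w \in I$. By the recursive description above, $(I;x_n,\dots,x_1)$ has the complete $\Gm$-full property. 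For the ``if'' direction, assume $(I;x_n,\dots,x_1)$ has the complete $\Gm$-full property; then $\Gm I : x_n = I$ and, by induction, $\overline I$ is stable. To check that $I$ is stable, take a minimal generator $u$ of $I$ and $k<m(u)$. If $m(u)<n$, then $u$ is a minimal generator of $\overline I$ with the same $m(u)$, so $x_k u / x_{m(u)} \in \overline I$, and being free of $x_n$ it lies in $I$. If $m(u)=n$, then $x_k u / x_n$ is a monomial with $x_n\cdot(x_k u / x_n) = x_k u \in \Gm I$, so it lies in $\Gm I : x_n = I$.

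I expect the only genuinely delicate points to be getting the indexing right in the recursive unfolding of the complete $\Gm$-full property along the flag $x_n,\dots,x_1$, where an off-by-one is easy to commit, and the monomial case analysis establishing $\Gm I : x_n = I$ for stable $I$; everything else is routine.
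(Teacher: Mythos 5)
Your proof is correct, and it takes a more elementary, self-contained route than the paper's. The paper handles the direction ``stable $\Rightarrow$ $(I;x_n,\ldots,x_1)$ has the complete $\Gm$-full property'' by citing Example 17 of \cite{HW}, and for the converse it invokes the structure theory of $\Gm$-full ideals: via Remark~\ref{notation of mfull} (which rests on Proposition 2.2 of \cite{GH}) and Lemma~\ref{gen of mfull} it writes a minimal generating set as $x_nu_1,\ldots,x_nu_s,v_1,\ldots,v_t$ with $u_j\in I:\Gm$ and $x_n\nmid v_j$, checks the exchange condition on the $x_nu_j$ using $u_j\in I:\Gm$, and handles the $v_j$ by induction on the number of variables. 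You instead run one induction on $n$ and argue purely with monomials: you prove $\Gm I:x_n=I$ for stable $I$ by a direct case analysis on $x_nv=x_ju w$ (which is correct and replaces the citation of \cite{HW}), and in the converse direction you replace the generator-structure argument by the simple observation that a minimal generator $u$ with $m(u)=n$ satisfies $x_ku/x_n\in\Gm I:x_n=I$, relegating the case $m(u)<n$ to the inductive hypothesis for $\overline I$. Your two auxiliary facts about passing to $R/x_nR$ (the minimal generators of $\overline I$ are exactly the minimal generators of $I$ prime to $x_n$, and a monomial free of $x_n$ lies in $\overline I$ iff it lies in $I$) are correct and do the work that Lemma~\ref{gen of mfull} does in the paper. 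What the paper's route buys is coherence with the rest of Section 5, where the same structured generating set reappears in the proof of the main theorem; what yours buys is independence from \cite{HW} and \cite{GH}. The only step you leave implicit is that stability may be verified on minimal generators rather than on all monomials of the ideal, a standard reduction that the paper also uses without comment, so this is not a genuine gap.
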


\begin{proof}
The ``if'' part follows from Example 17 in \cite{HW}. 
So we show the ``only if'' part. 
Let $u_1,\ldots,u_s$ be monomials in $I:\Gm$ 
such that $\{\overline{u_1},\ldots,\overline{u_s}\}$ is 
a minimal generating set of $(I:\Gm)/I$, 
where $\overline{u_i}$ is the image of $u_i$ in $R/I$. 
Then it follows from Remark~\ref{notation of mfull} that 
$\{x_nu_1,\ldots,x_nu_s \}$ is a part of a minimal generating set of $I$. 
Write a minimal generating set of $I$ as 
\[ 
\GB=\{ x_nu_1,\ldots,x_nu_s, v_1,\ldots,v_t \}
\] 
where $v_1,\ldots,v_t$ are also monomials of $I$. 
This is the unique minimal set of monomial generators of $I$. 
Hence, 
to verify that $I$ is stable, 
it suffices to show that, for each $w \in \GB$, 
$x_iw/x_{m(w)} \in I$ for every $i<m(w)$. 
Since $u_j\in I:\Gm$, 
it follows that $x_i(x_nu_j)/x_n = x_iu_j \in I$. 
Furthermore it follows from Lemma~\ref{gen of mfull} (1) that 
$\{\overline{v_1},\ldots,\overline{v_t}\}$ is a minimal generating set of $\overline{I}$ in $R/x_nR$, 
and hence $x_n$ does not divide $v_j$ for all $j$. 
Therefore, 
by an inductive argument on the number of variables, 
we have that $x_iv_j/x_{m(v_j)} \in I$ for every $i<m(v_j)$.  
\end{proof}

\begin{lemma}\label{Gin is CMF}
Let $I$ be a completely $\Gm$-full ideal of the polynomial ring  $R=K[x_1,\ldots,x_n]$. 
Then $(\Gin(I); x_n,x_{n-1},\ldots,x_1)$ has the completely $\Gm$-full property. 
\end{lemma}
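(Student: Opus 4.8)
The plan is to bypass any direct examination of the monomial generators of $\Gin(I)$ and instead to count minimal generators one variable at a time, exploiting the characterization of completely $\Gm$-full ideals in Theorem~\ref{char of cm-full}. Write $J=\Gin(I)$ and, as in Definition~\ref{def: t-sequence}, set $R_{(k)}=R/(x_{k+1},\ldots,x_n)R$ for $k=0,1,\ldots,n-1$ and $R_{(n)}=R$, so that $R_{(0)}=K$; write $\overline{x_k}$ for the image of $x_k$ in $R_{(k)}$ and note that the image of $JR_{(k)}$ in $R_{(k)}/\overline{x_k}R_{(k)}$ is $JR_{(k-1)}$. The cases $I=0$ and $I=R$ are immediate, so assume $I$ is nonzero and proper.

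First I would assemble three facts. Since $I$ is completely $\Gm$-full, Theorem~\ref{char of cm-full} gives $\mu(I)=B(I)=t_0+t_1+\cdots+t_{n-1}$. By Remark~\ref{t-seq of I and GinI} the $t$-sequence of $J$ coincides with that of $I$, and, more precisely, for a general linear form the colon length computes the $t$-value, so
\[
l\big((JR_{(k)}:\overline{x_k})/JR_{(k)}\big)=t_{k-1}\qquad (k=1,\ldots,n),
\]
whence $\sum_{k=1}^n l\big((JR_{(k)}:\overline{x_k})/JR_{(k)}\big)=B(I)$. Finally, $\mu(I)\leq\mu(J)$ (e.g.\ because the leading terms of a minimal Gr\"obner basis of a generic coordinate change of $I$ minimally generate $J$, and there are at least $\mu(I)$ of them).

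Next I would telescope. Lemma~\ref{inequality of m-full}(2), applied to the ideal $JR_{(k)}$ of $R_{(k)}$ and the linear form $\overline{x_k}$, yields $\mu(JR_{(k)})\leq\mu(JR_{(k-1)})+l\big((JR_{(k)}:\overline{x_k})/JR_{(k)}\big)$ for each $k$. Summing over $k=1,\ldots,n$ and using $\mu(JR_{(0)})=0$ gives
\[
\mu(J)=\mu(JR_{(n)})\leq\sum_{k=1}^n l\big((JR_{(k)}:\overline{x_k})/JR_{(k)}\big)=B(I)=\mu(I)\leq\mu(J),
\]
so equality holds at every step. In particular $\mu(JR_{(k)})=\mu(JR_{(k-1)})+l\big((JR_{(k)}:\overline{x_k})/JR_{(k)}\big)$ for all $k=1,\ldots,n$, and Lemma~\ref{eq of m-full} converts this into $\Gm(JR_{(k)}):\overline{x_k}=JR_{(k)}$ in $R_{(k)}$ for all $k$. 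By the description of the complete $\Gm$-full property in the remark following Definition~\ref{completely m-full}, this says exactly that $(J;x_n,x_{n-1},\ldots,x_1)$ has the complete $\Gm$-full property. As a byproduct the same chain of equalities yields $\mu(I)=\mu(\Gin(I))$, the remaining ingredient for the Nagel-R\"{o}mer criterion.

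I expect the only delicate point to be making the telescoped estimate tight: this needs simultaneously the hypothesis $\mu(I)=B(I)$ (via Theorem~\ref{char of cm-full}), the elementary bound $\mu(I)\leq\mu(\Gin(I))$, and the full strength of Remark~\ref{t-seq of I and GinI} — namely that each individual colon length $l\big((JR_{(k)}:\overline{x_k})/JR_{(k)}\big)$ at the specific variable $x_k$ equals $t_{k-1}$, not merely that the totals agree. Once those are in place the rest is a formal consequence of the exact-sequence inequalities in Lemmas~\ref{inequality of m-full} and~\ref{eq of m-full}, and no separate induction on $n$ is required beyond what those lemmas already contain.
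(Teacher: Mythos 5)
Your argument is correct, and it rests on exactly the same numerical sandwich as the paper's own proof: $B(\Gin(I))=B(I)=\mu(I)\leq\mu(\Gin(I))\leq B(\Gin(I))$, with Theorem~\ref{char of cm-full} supplying $\mu(I)=B(I)$, Remark~\ref{t-seq of I and GinI} supplying the coincidence of $t$-sequences, and the standard bound $\mu(I)\leq\mu(\Gin(I))$ (which the paper also invokes without comment, and which your reduced Gr\"obner basis justification handles correctly). Where you genuinely diverge is in how the equality $\mu(\Gin(I))=B(\Gin(I))$ is converted into the conclusion: the paper cites Theorem~\ref{char of cm-full} once more to conclude that $\Gin(I)$ is completely $\Gm$-full, whereas you unfold that implication into a telescoped chain of the inequalities of Lemma~\ref{inequality of m-full}(2) along the specific variables $x_n,\ldots,x_1$, force termwise equality, and apply Lemma~\ref{eq of m-full} at each stage; this is essentially the induction hidden inside the proof of Theorem~\ref{char of cm-full}, carried out at the variables themselves. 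That variant buys two things. First, it delivers literally the statement of the lemma, namely the colon equalities $\Gm(JR_{(k)}):\overline{x_k}=JR_{(k)}$ for $J=\Gin(I)$ in each $R_{(k)}$, which is exactly what Lemma~\ref{eq of stable} needs, whereas the paper's appeal to Theorem~\ref{char of cm-full} formally yields complete $\Gm$-fullness with respect to general linear forms and leaves the passage to the specific sequence $x_n,\ldots,x_1$ implicit; your use of Remark~\ref{t-seq of I and GinI} to identify each individual $l\bigl((JR_{(k)}:\overline{x_k})/JR_{(k)}\bigr)$ with $t_{k-1}$ is precisely the point that closes this. Second, you obtain $\mu(I)=\mu(\Gin(I))$ as a byproduct, which the paper establishes separately in the proof of Theorem~\ref{main theorem} via Lemmas~\ref{gen of mfull} and~\ref{Hilb of full} and an induction on the number of variables, so your lemma-level argument would somewhat streamline Section 5.
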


\begin{proof}
Let $B(I)$ and $B(\Gin(I))$ be 
the sums of the $t$-sequences of $I$ and $\Gin(I)$ respectively 
as in Theorem~\ref{char of cm-full}. 
Since the $t$-sequence of $I$ coincides with that of $\Gin(I)$ by Remark~\ref{t-seq of I and GinI}, 
we see that $B(I)=B(\Gin(I))$. 
Furthermore 
the equality $\mu(I)=B(I)$ holds by Theorem~\ref{char of cm-full} 
and the inequality $\mu(\Gin(I)) \leq B(\Gin(I))$ holds by Lemma~\ref{inequality of t-seq}. 
Hence we have 
\[
B(\Gin(I)) = B(I) = \mu(I) \leq \mu(\Gin(I)) \leq B(\Gin(I)). 
\]
Therefore the equality $\mu(\Gin(I)) = B(\Gin(I))$ holds. 
Thus $\Gin(I)$ is completely $\Gm$-full by Theorem~\ref{char of cm-full}. 
\end{proof}

\begin{lemma}\label{Hilb of full}
Let $I$ be an $\Gm$-full ideal of $R$, 
and assume that $\Gin(I)$ is $\Gm$-full. 
Then 
\[
l((I:\Gm)/I) = l((\Gin(I):\Gm)/\Gin(I)). 
\]
\end{lemma}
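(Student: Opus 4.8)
The plan is to identify each of the two lengths with the last entry $t_{n-1}$ of the corresponding $t$-sequence, and then to invoke Remark~\ref{t-seq of I and GinI}, which asserts that the $t$-sequence of $I$ coincides with that of $\Gin(I)$. Thus the whole argument reduces to the following claim: \emph{if $J$ is an $\Gm$-full graded ideal of $R$, then $l((J:\Gm)/J)=t_{n-1}(J)$.} Granting this claim and applying it to $J=I$ and to $J=\Gin(I)$ (the latter $\Gm$-full by hypothesis), one obtains
\[
l((I:\Gm)/I)=t_{n-1}(I)=t_{n-1}(\Gin(I))=l((\Gin(I):\Gm)/\Gin(I)),
\]
which is the assertion of the lemma.

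To prove the claim I would argue as follows. First, $l((J:\Gm)/J)$ is finite, since $(J:\Gm)/J$ is the socle of $R/J$ and hence a finite-dimensional $K$-vector space, so all the lengths below make sense. Since $z\in\Gm$ forces $J:\Gm\subseteq J:z$, one has $l((J:\Gm)/J)\le l((J:z)/J)$ for every linear form $z$; choosing $z$ general and using Theorem~\ref{thm: t-sequence}(3), which gives $l((J:z)/J)=l((JR^\prime:_{R^\prime}Y)/JR^\prime)=t(J)$ for general $z$, yields $l((J:\Gm)/J)\le t(J)$. Conversely, since $J$ is $\Gm$-full there is a linear form $z_0$ with $\Gm J:z_0=J$, whence $J:\Gm=J:z_0$ by Remark~\ref{notation of mfull}, and then Theorem~\ref{thm: t-sequence}(2) gives $l((J:\Gm)/J)=l((J:z_0)/J)\ge t(J)$. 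Hence $l((J:\Gm)/J)=t(J)$, and finally $t(J)=t_{n-1}(J)$ by Definition~\ref{def: t-sequence}, since $R^{(n)}=R$ and $t_{n-1}(J)$ is by definition the $t$-value of $JR^{(n)}=J$. This proves the claim.

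The proof is short and I foresee no real obstacle: the only input beyond bookkeeping is the comparison of $l((J:\Gm)/J)$ with the generic value $l((J:z)/J)$, which is exactly what $\Gm$-fullness together with the two inequalities of Theorem~\ref{thm: t-sequence} provide. The one subtlety worth flagging explicitly in the write-up is that $J:\Gm=J:z$ for \emph{every} $z$ realizing $\Gm J:z=J$ — so no two genericity conditions need be matched up — which is precisely the content of Remark~\ref{notation of mfull}.
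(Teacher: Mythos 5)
Your proof is correct, but it takes a genuinely different route from the paper's. You reduce everything to the intrinsic identity: for an $\Gm$-full graded ideal $J$ one has $l((J:\Gm)/J)=t(J)$, obtained by squeezing $l((J:\Gm)/J)$ between the two inequalities of Theorem~\ref{thm: t-sequence} (the upper bound from $J:\Gm\subseteq J:z$ with $z$ general, the lower bound from $J:\Gm=J:z_0$ for the linear form $z_0$ realizing $\Gm$-fullness, which is exactly Remark~\ref{notation of mfull}); the lemma then follows because $t_{n-1}(I)=t_{n-1}(\Gin(I))$ by Remark~\ref{t-seq of I and GinI}. The paper instead argues directly: it picks one general linear form $z$ with $\Gm I:z=I$ and $\Gm\Gin(I):z=\Gin(I)$ simultaneously, writes the multiplication-by-$z$ exact sequences for both ideals, and compares Hilbert functions degree by degree using $\Hilb(R/I)=\Hilb(R/\Gin(I))$ and Conca's lemma, which actually yields the stronger statement $\Hilb((I:\Gm)/I,j)=\Hilb((\Gin(I):\Gm)/\Gin(I),j)$ for every $j$, not just equal lengths. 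Ultimately both arguments rest on the same external input (Conca's \cite[Lemma 1.2]{C}), yours indirectly through Remark~\ref{t-seq of I and GinI}; what your version buys is a clean conceptual statement (for $\Gm$-full ideals the socle length equals the $t$-value, i.e.\ the minimal colon length over all linear forms) and no need to match up genericity conditions for $I$ and $\Gin(I)$, at the cost of losing the graded refinement and of leaning on the remark rather than being self-contained. Two small points to tidy in a write-up: finiteness of $l((J:\Gm)/J)$ is because $(J:\Gm)/J$ is a finitely generated module annihilated by $\Gm$ (being the socle alone is not the reason), and you should note that the $z_0$ provided by $\Gm$-fullness can be taken to be a linear form, as the paper's Remark following the definition guarantees.
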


\begin{proof} 
It suffices to show that 
\[
\Hilb((I:\Gm)/I,j)=\Hilb((\Gin(I):\Gm)/\Gin(I),j)
\] 
for all $j$. 
Set $J=\Gin(I)$. 
Since $I$ and $J$ are $\Gm$-full, 
there exists a general linear form $z$ of $R$ satisfying $\Gm I:z=I$ and $\Gm J:z=J$. 
Then it is easy to see that $I:\Gm=I:z$ and $J:\Gm=J:z$. 
Hence, from the exact sequence 
\[
0 \rightarrow (I:\Gm)/I \rightarrow R/I \stackrel{\times z}{\rightarrow} 
R/I \rightarrow R/(I+zR) \rightarrow 0, 
\] 
we have 
\[
\Hilb((I:\Gm)/I,j-1)=\Hilb(R/I+zR,j)-\Hilb(R/I,j)+\Hilb(R/I,j-1)
\] 
for all $j$. 
Similarly we have 
\[ 
\Hilb((J:\Gm)/J,j-1)=\Hilb(R/J+zR,j)-\Hilb(R/J,j)+\Hilb(R/J,j-1)  
\] 
for all $j$. 
Recall the well-known facts: 
\begin{itemize}
\item 
$\Hilb(R/I,j)=\Hilb(R/J,j)$ for all $j$. 
\item 
$\Hilb(R/(I+zR),j)=\Hilb(R/(J+zR),j)$ for all $j$ (\cite[Lemma 1.2]{C}). 
\end{itemize}
Hence we get the desired equalities. 
\end{proof}

We are ready to prove Theorem~\ref{main theorem}.

\begin{proof}[Proof of Theorem~\ref{main theorem}.] 
As mentioned in Introduction, 
it suffices to show that 
if $I$ is completely $\Gm$-full, 
then $\Gin(I)$ is stable and $\mu(I)=\mu(\Gin(I))$. 

First note that $\Gin(I)$ is stable. 
This follows from Lemmas~\ref{eq of stable} and~\ref{Gin is CMF}. 
Next we show that $\mu(I)=\mu(\Gin(I))$. 
After a generic linear change of variables 
we may assume that 
$(I; x_n,x_{n-1},\ldots,x_{1})$ has the complete $\Gm$-full property. 
Since $I$ and $\Gin(I)$ are $\Gm$-full, 
it follows by Lemma~\ref{gen of mfull} (2) that 
\[
\mu(I)=\mu(\overline{I})+l((I:\Gm)/I) \ \ 
\mbox{ and } \ \ \mu(J)=\mu(\overline{J})+l((J:\Gm)/J). 
\]
Since $\overline{J}$ is the generic initial ideal of $\overline{I}$ 
(\cite[Corollary 2.15]{Gr}), 
it follows by an inductive argument on the number of variables 
that $\mu(\overline{I}) = \mu(\overline{J})$.  
Hence the equality $\mu(I)=\mu(J)$ holds by Lemma~\ref{Hilb of full}. 
\end{proof}

\begin{remark}
{\rm 
If $K$ is a finite field, an ideal  can be  componentwise linear without being  
completely $\Gm$-full.  To construct an example, suppose that 
$R=K[x_1, \ldots, x_n]$ is the polynomial ring over a finite field $K$. Assume $n \geq 2$.
Let $f$ be the product of {\em all} linear forms in $R$ and let  $I$ be the ideal generated 
by $f$ and $(x_1, \ldots, x_n)^{d+1}$, where $d=\deg f$.  
Then it is easy to see that the ideal $I$ is componentwise linear but not  $\Gm$-full. 
On the other hand if  an ideal  $I \subset R$ is completely $\Gm$-full, 
then $I$ is  necessarily componentwise linear. 
To see this let $K'$ be an infinite field containing $K$ and   $R'=R \otimes _K K'$.  
If  $I \subset R$ is  completely $\Gm$-full, then 
 the ideal $I' = I\otimes _K K'$ is completely $\Gm$-full in $R'$. 
By Theorem~\ref{main theorem} $I'$ is componentwise linear. This implies that $I$ is componentwise linear, since  
a minimal free resolution of $I_{<j>}$ over $R$ for any $j$ induces a minimal free resolution of 
$(I')_{<j>}$ over $R'$. 
}
\end{remark}

\begin{remark}
{\rm 

The original definition of $\mathfrak{m}$-fullness 
was suggested to the second author by Rees himself (see Introduction in \cite{Watanabe}):  
An ideal $\mathfrak{a}$ of  a local ring $(R,\mathfrak{m})$ is called $\mathfrak{m}$-full 
if $\mathfrak{a}\mathfrak{m}:y=\mathfrak{a}$ for some $y$ in a certain faithfully flat extension of $R$.  
If we use this definition, 
Theorem~\ref{main theorem} is true without assuming $K$ to be infinite. 

}
\end{remark}


\section{Componentwise linear ideals of low type} 

In this section we give a generalization of a theorem of Nagel and R\"{o}mer, 
which states that a componentwise linear Gorenstein ideal 
exists only in embedding dimension one.  
The following is a consequence of Theorem~\ref{main theorem}.

\begin{proposition}\label{generalization}
Suppose that $R=K[x_1, \ldots, x_n]$ is the polynomial ring 
and $I$ is a componentwise linear ideal of height $h$ 
such that $R/I$ is Cohen-Macaulay.  
If the type of $I$ is  $r$ and $h \geq r$, 
then $I$ contains $h-r$ linearly independent linear forms. 
\end{proposition}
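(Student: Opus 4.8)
The plan is to reduce to the Artinian case by factoring out a regular sequence and then exploit the characterization of completely $\Gm$-full ideals together with the fact (Corollary~\ref{Cor:reduction}) that completely $\Gm$-fullness is preserved under reduction modulo a nonzerodivisor of degree one. First I would pass to $\Gin(I)$: by Theorem~\ref{main theorem}, $I$ is completely $\Gm$-full, so by Lemmas~\ref{Gin is CMF} and~\ref{eq of stable} the ideal $J=\Gin(I)$ is stable, and by Remark~\ref{t-seq of I and GinI} and the usual properties of generic initial ideals $R/J$ is again Cohen-Macaulay of height $h$, with the same type $r$, and with the same graded Betti numbers in the case of componentwise linear ideals. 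Moreover, $x_n,\ldots,x_{n-h+1}$ form a regular sequence modulo $J$ (reverse lexicographic $\Gin$ detects depth), so I may replace $J$ by its image $\bar J$ in $\bar R = R/(x_n,\ldots,x_{n-h+1})R = K[x_1,\ldots,x_{n-h}]$, which is a stable Artinian ideal of the polynomial ring in $n-h$ variables, of type $r$.

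Next I would analyze what type $r$ means for a stable Artinian ideal $\bar J$ in $m:=n-h$ variables. The socle $(\bar J:\Gm)/\bar J$ of a stable ideal is easy to describe: for a stable Artinian ideal the socle is spanned by monomials of the form $u/x_{m(u)}\cdot x_m$ coming from the minimal generators, and in fact the last Betti number of $R/\bar J$ equals the number of minimal generators $w$ of $\bar J$ with $m(w)=m$ (this is the Eliahou-Kervaire formula specialized to the top homological degree). So $r$ equals the number of minimal monomial generators of $\bar J$ divisible by $x_m$. The point is then: if $r < m$, I claim $\bar J$ must contain a variable. Indeed, since $\bar J$ is Artinian, some power of $x_m$ lies in $\bar J$; take the minimal generator $w$ of smallest degree among those divisible by $x_m$. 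If $\deg w \geq 2$, then $w/x_m \notin \bar J$, and applying stability by replacing $x_m$ successively... — more robustly, I would count: the minimal generators of a stable ideal divisible by $x_m$ are exactly those $w$ with $m(w)=m$, and one shows via the exchange property that if none of the variables $x_1,\ldots,x_m$ lies in $\bar J$ then the "staircase" forces at least $m$ such generators (each variable $x_i$ has a smallest power-type obstruction contributing a distinct generator divisible by $x_m$ after the stable closure operations), giving $r \geq m$, a contradiction. Thus if $r < h$, i.e. $r < m + (\text{something})$... — let me be careful: actually $r\ge m = n-h$ is what I will prove when $\bar J$ contains no linear form, and combined with $h\ge r$ this is vacuous unless we track linear forms. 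So instead I would argue: $\bar J$ contains exactly $m - r'$ independent linear forms where $r'$ is the type of the remaining Artinian ideal in $r'$... this is getting circular, so the clean statement to prove by induction on $m$ is: a stable Artinian ideal in $m$ variables of type $r$ with $r \le m$ contains at least $m-r$ variables.

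That inductive claim is the technical heart, and it is where I expect the main obstacle to lie. The induction step: given stable Artinian $\bar J \subset K[x_1,\ldots,x_m]$ of type $r < m$, I want to show $x_1 \in \bar J$ (reindexing so the smallest-index variable is the one we extract), then pass to $K[x_2,\ldots,x_m]$. Equivalently, among the minimal generators of $\bar J$, if $x_1 \notin \bar J$ then by stability every minimal generator $w$ yields, for each $i < m(w)$, the monomial $x_i w/x_{m(w)} \in \bar J$; pushing all indices down to $1$ and using that $\bar J$ is Artinian and $x_1 \notin \bar J$ forces a generator $x_1^a$ with $a\ge 2$, and then the exchange property generates, for each choice of "which later variable we raised", a distinct minimal generator divisible by $x_m$ — carefully bookkeeping gives $\ge m-1 \ge r$, contradiction unless... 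The delicate point is making the counting of socle-contributing (equivalently, $m(w)=m$) generators sharp rather than just a crude bound; the Eliahou-Kervaire description of the graded Betti numbers of stable ideals, $\beta_{i}(R/\bar J) = \sum_{w} \binom{m(w)-1}{i-1}$ summed over minimal generators $w$, makes $\beta_{m}(R/\bar J) = \#\{w : m(w)=m\} = r$ precise, so the real work is the combinatorial lemma that a stable Artinian ideal with few ($<m$) generators of maximal index $m$ must contain variables. Once that lemma is in hand, lifting back through the regular sequence and through $\Gin$ recovers $h - r$ independent linear forms in the original $I$ (linear forms in $\Gin(I)$ lift to linear forms spanning a space of the same dimension in $I$, since the number of linear minimal generators is a $\Gin$-invariant for componentwise linear ideals by Nagel–Römer).
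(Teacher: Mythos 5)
Your route is genuinely different from the paper's, but as written it has a real gap at exactly the point you flag: the combinatorial lemma that a stable Artinian ideal $J\subset K[x_1,\dots,x_m]$ of type $r$ (equivalently, by Eliahou--Kervaire, with $r$ minimal generators $w$ such that $m(w)=m$) must contain at least $m-r$ variables is never proved; the sketch you give (``pushing all indices down to $1$'', ``exchange property'', a counting that you yourself leave hanging at ``contradiction unless\ldots'') does not constitute an argument. The lemma is true and in fact has a short direct proof you did not find: if $x_m\in J$ then stability puts every variable in $J$ and there is nothing to prove; otherwise, for each $i<m$ with $x_i\notin J$ let $c_i\ge 1$ be minimal with $x_ix_m^{c_i}\in J$ (it exists since $J$ is Artinian); stability shows $x_m^{c_i}\notin J$ (else $x_ix_m^{c_i-1}\in J$, contradicting minimality or $x_i\notin J$), and since every proper divisor of $x_ix_m^{c_i}$ divides $x_ix_m^{c_i-1}$ or $x_m^{c_i}$, the monomial $x_ix_m^{c_i}$ is a minimal generator with maximal index $m$; together with the minimal pure power $x_m^{a}\in J$ these generators are pairwise distinct, so $r\ge m-k$ where $k$ is the number of variables in $J$, which is the claim (no induction needed). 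There is also a bookkeeping error: since $\mathrm{depth}\,R/\Gin(I)=\mathrm{depth}\,R/I=n-h$, the Artinian reduction of $\Gin(I)$ is obtained by killing the last $n-h$ variables and lives in $K[x_1,\dots,x_h]$, so $m=h$, not $m=n-h$ as you wrote; with your $m$ the lemma would yield $n-h-r$ linear forms rather than $h-r$. Finally, your reduction needs the type of $\Gin(I)$ to equal $r$, which you get by importing the equality of all graded Betti numbers of $I$ and $\Gin(I)$ for componentwise linear ideals; that is a correct but nontrivial external result, and similarly you invoke Bayer--Stillman depth preservation and Eliahou--Kervaire, none of which the paper uses here.

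For comparison, the paper's proof never mentions $\Gin$ or stability: by Theorem~\ref{main theorem} the ideal $I$ is completely $\Gm$-full, hence so is its Artinian reduction $\overline{I}$ modulo $n-h$ general linear forms (Corollary~\ref{Cor:reduction}); $\Gm$-fullness gives $\overline{I}:\overline{\Gm}=\overline{I}:\overline{z}$ for a general linear form $z$ (Remark~\ref{notation of mfull}), so $r=l\bigl((\overline{I}:\overline{z})/\overline{I}\bigr)=l\bigl(\overline{R}/(\overline{I}+\overline{z}\,\overline{R})\bigr)$, and a length-$r$ graded quotient of a polynomial ring in $h-1$ variables has degree-one part of dimension at most $r-1$, which (using generality of the chosen linear forms) forces $h-r$ independent linear forms into $I$ itself. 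If you supply the lemma above and fix the $h$ versus $n-h$ slip, your plan can be completed, but it is considerably heavier machinery for a statement the paper obtains from complete $\Gm$-fullness and a two-line length count.
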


\begin{proof}
First note that $I$ is completely $\Gm$-full by Theorem~\ref{main theorem}. 
Since $R/I$ is Cohen-Macaulay of dimension $n-h$, 
there exists a regular sequence mod $I$ 
consisting of $n-h$ linear forms in $R$, 
says $\{y_1,\ldots,y_{n-h}\}$. 
Let $\overline{I}$ be the image of $I$ in $\overline{R}=R/(y_1,\ldots,y_{n-h})R$. 
Then $\overline{I}$ is also completely $\Gm$-full in $\overline{R}$ by Corollary~\ref{Cor:reduction}, 
and hence the equality $\overline{\Gm}\overline{I}:\overline{z}=\overline{I}$ 
holds for some linear form $z$ in $R$. 
Therefore it follows that 
$\overline{I}:\overline{\Gm}=\overline{I}:\overline{z}$ (see Remark~\ref{notation of mfull}), 
and  
\[ \begin{array}{rcl}
r &=& l((I:\Gm)/I) = l((\overline{I}:\overline{\Gm})/\overline{I}) 
= l((\overline{I}:\overline{z})/\overline{I}) \\
 &=& l(\overline{R}/(\overline{I}+\overline{z}\overline{R}))
= l(R/(I+(z,y_1,\ldots,y_{n-h})R)).
\end{array}
\] 
Since $l(R/(I+(z,y_1,\ldots,y_{n-h})R)) \leq h$ by assumption, 
it follows that $I$ must contain 
a regular sequence consisting of $h-r$ linearly independent linear forms. 
Those linear forms are members of a minimal generating set of $I$.  
\end{proof}

\begin{corollary}[Nagel-R\"omer]\label{level one} 
Suppose that $R=K[x_1, \ldots, x_n]$ is the polynomial ring 
and $I$ is a Gorenstein ideal of height $h$. 
Then $I$ is componentwise linear if and only if 
$I$ is a complete intersection ideal minimally generated by at least $h-1$ linear forms. 
\end{corollary}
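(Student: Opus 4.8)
The plan is to deduce both implications from Proposition~\ref{generalization} together with the classical structure of grade-one Gorenstein ideals, using Theorem~\ref{main theorem} (or, equivalently, the Nagel--R\"omer criterion recalled in the Introduction) only for the converse.

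For the ``only if'' part, suppose $I$ is componentwise linear. Since $R/I$ is Gorenstein it is Cohen--Macaulay of type $r=1$, so (the case $h=0$ being trivial, as then $I=0$) Proposition~\ref{generalization} yields $h-1$ linearly independent linear forms $\ell_1,\dots,\ell_{h-1}\in I$ which are members of a minimal generating set of $I$. They form a regular sequence, and in the polynomial ring $\overline R=R/(\ell_1,\dots,\ell_{h-1})R$ (in $n-h+1$ variables) the image $\overline I$ is again Gorenstein, now of grade one, hence principal: $\overline I=(\overline f)$. Lifting $f$ to $I$ and using $(\ell_1,\dots,\ell_{h-1})\subseteq I$ we get $I=(\ell_1,\dots,\ell_{h-1},f)$, an ideal of height $h$ generated by $h$ elements in a Cohen--Macaulay ring, hence a complete intersection, minimally generated by at least $h-1$ linear forms.

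For the ``if'' part, let $I=(\ell_1,\dots,\ell_{h-1},g)$ be a complete intersection with $\ell_1,\dots,\ell_{h-1}$ linearly independent linear forms and $d=\deg g$. After a linear change of coordinates we may take $\ell_i=x_i$ and $g\in K[x_h,\dots,x_n]$ (if $g$ were to vanish modulo $(x_1,\dots,x_{h-1})$ the height would drop). Passing to generic coordinates, $\In(I)$ contains $x_1,\dots,x_{h-1}$ (leading terms of the now-generic linear forms) and, modulo these, the leading term $x_h^{d}$ of a generic form of degree $d$ in $x_h,\dots,x_n$; thus $\In(I)\supseteq(x_1,\dots,x_{h-1},x_h^{d})$. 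Since $R/I$ and $R/(x_1,\dots,x_{h-1},x_h^{d})$ have the same Hilbert series (each being a polynomial ring in $n-h+1$ variables modulo a single non-zerodivisor of degree $d$), this inclusion is an equality: $\Gin(I)=(x_1,\dots,x_{h-1},x_h^{d})$. This ideal is stable and $\mu(\Gin(I))=h=\mu(I)$, so $I$ is componentwise linear by condition (ii) of the Nagel--R\"omer criterion. (Alternatively: $(x_1,\dots,x_{h-1},x_h^{d})$ is a stable monomial ideal, hence completely $\Gm$-full; reducing $I$ to it modulo a maximal regular sequence of general linear forms mod $I$ via Corollary~\ref{Cor:reduction} shows $I$ is completely $\Gm$-full, and then Theorem~\ref{main theorem} applies.)

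The only step that is not pure bookkeeping is the identification of $\Gin(I)$ in the ``if'' direction, i.e.\ that a generic form $g$ of degree $d$ has $\In((g))=(x_h^{d})$; I expect this to be the main (though still routine) obstacle, and it follows immediately from the Hilbert-function count above together with the evident inclusion $(x_h^{d})\subseteq\In((g))$. Everything else is standard: the reduction of the Gorenstein quotient to embedding dimension one, and the fact that a grade-one Gorenstein ideal is principal, read off from its length-one minimal free resolution $0\to \overline R\to\overline R\to\overline R/\overline I\to 0$.
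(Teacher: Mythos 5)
Your proof is correct, and in one direction it takes a genuinely different route from the paper. For the implication ``complete intersection minimally generated by at least $h-1$ linear forms $\Rightarrow$ componentwise linear'', the paper first reduces to the $\Gm$-primary case via Corollary~\ref{Cor:reduction}, writes $I=(z_1,\dots,z_{n-1},z_n^d)$, checks $\Gm$-fullness numerically through Lemma~\ref{eq of m-full}, notes that the image modulo $z_n$ is generated by linear forms, and so gets complete $\Gm$-fullness and then applies Theorem~\ref{main theorem}; your parenthetical alternative is essentially this argument, with ``stable implies completely $\Gm$-full'' replacing the numerical check. Your primary route instead identifies $\Gin(I)=(x_1,\dots,x_{h-1},x_h^d)$ explicitly (the inclusion from generic leading terms plus the Hilbert-function count is fine, and the passage from ``some coordinates'' to ``generic coordinates'' is the routine openness/irreducibility argument) and then invokes condition (ii) of the Nagel--R\"omer criterion. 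This is logically sound, since that criterion is a different result of \cite{NR} from the Gorenstein theorem being proved and is already used for Theorem~\ref{main theorem}; it buys an explicit generic initial ideal but bypasses the completely $\Gm$-full framework that the paper's Section 6 is designed to exploit. For the converse you argue exactly as the paper does, via Proposition~\ref{generalization} with $r=1$, and you usefully supply the step the paper leaves implicit: after factoring out the $h-1$ independent linear forms, the resulting grade-one Gorenstein ideal has a length-one free resolution and is therefore principal, so $I$ is indeed a complete intersection minimally generated by $h$ elements, at least $h-1$ of them linear.
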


\begin{proof}
The ``if" part follows from Proposition~\ref{generalization}. 
The ``only if" part: 
By Corollary~\ref{Cor:reduction} 
it suffices to prove it in the case where $I$ is $\Gm$-primary. 
It is obvious that $I$ is Gorenstein. 
By assumption, 
there exist $n$ linear forms $z_1,\ldots,z_{n-1},z_n$ and an integer $d>0$ 
such that $I=(z_1,\ldots,z_{n-1},z_n^d)$. 
Let $\overline{I}$ be the image of $I$ in $R/z_nR$. 
Then the equality $\mu(I)-\mu(\overline{I})=l(R/I+z_nR)$ holds 
because $\mu(I)=n$, $\mu(\overline{I})=n-1$ and $l(R/I+z_nR)=1$. 
Hence $I$ is $\Gm$-full by Lemma~\ref{eq of m-full}. 
Furthermore it is obvious that 
$\overline{I}=(\overline{z_1},\ldots,\overline{z_{n-1}})$ 
is completely $\Gm$-full in $R/z_nR$. 
Therefore $I$ is completely $\Gm$-full, 
and hence $I$ is compenetwise linear by Theorem~\ref{main theorem}. 
\end{proof}

This was proved by Nagel and R\"{o}mer in Theorem 3.1 of \cite{NR}. 
Our proof as a corollary of 
Theorem~\ref{main theorem} and Proposition~\ref{generalization} 
is completely different from theirs.  
There are also similar results 
in \cite[Theorem 1.1]{G} and \cite[Proposition 2.4]{GH}.



%


\begin{thebibliography}{99}

\bibitem{C}
{A. Conca}. 
\textit{Reduction number and initial ideals}, 
Proc. Amer. Math. Soc. 
\textbf{131} (2003), 1015--1020.    
\bibitem{G} 
{S. Goto}. 
\textit{Integral closedness of complete-intersection ideals}, 
J. Algebra 
\textbf{108} (1987), 151--160. 
\bibitem{GH} 
{S. Goto \and F. Hayasaka}. 
\textit{Finite homological dimension and primes associated to integrally closed ideals}, 
Proc. Amer. Math. Soc. 
\textbf{130} (2002), 3159--3164. 
\bibitem{Gr} 
{M. L. Green}. 
\textit{Generic initial ideals}, 
Six Lectures on Commutative Algebra, 
Birkh\"{a}user Verlag, Basel, Progress in Math. 
\textbf{166} (1998), 119--186.   
\bibitem{HH} 
{J. Herzog  \and T. Hibi}. 
\textit{Componentwise linear ideals}, 
Nagoya Math. J. 
\textbf{153} (1999), 141--153. 
\bibitem{HW} 
{T. Harima \and J. Watanabe}. 
\textit{The weak Lefschetz property for $\Gm$-full ideals and componentwise linear ideals}, 
Illinois J. Math. 
\textbf{156} (2012), 957--966. 
\bibitem{NR}
{U. Nagel \and T. R\"{o}mer}. 
\textit{Criteria for componentwise linearity}, 
Preprint arXiv:1108.3921v2  (2011).  
\bibitem{Watanabe}
{J. Watanabe}. 
\textit{$\mathfrak{m}$-full ideals}, 
Nagoya Math. J. 
\textbf{106} (1987), 101--111. 
\bibitem{Watanabe2}
{J. Watanabe}. 
\textit{The syzygies of $\Gm$-full ideals}, 
Math. Proc. Camb. Phil. Soc. 
\textbf{109} (1991), 7--13. 
\bibitem{Watanabe3}
{J. Watanabe}. 
\textit{$\mathfrak{m}$-full ideals II}, 
Math. Proc. Camb. Phil. Soc. 
\textbf{111} (1992), 231--240. 

\end{thebibliography}
\end{document}